\documentclass[10pt,twoside]{article}
\usepackage{amsmath,amssymb,amsthm,mathptmx,hyperref,nicefrac,color}
\usepackage{titlesec}
\usepackage{indentfirst}
\usepackage[inner=2.4cm,outer=2.65cm,bottom=3cm]{geometry}
\oddsidemargin-5mm
\titleformat*{\section}{\normalsize\bfseries}
\titleformat*{\subsection}{\normalsize\itshape}
\pagestyle{myheadings}
\numberwithin{equation}{section}
\newcommand{\f}[1]{\pmb{#1}}
\DeclareMathOperator{\R}{\mathbb{R}}

\DeclareMathOperator{\C}{\mathcal{C}}

\DeclareMathOperator{\N}{\mathbb{N}}
\DeclareMathOperator{\V}{\f H ^1 _{0,\sigma}}
\DeclareMathOperator{\Vd}{(\f H ^{1} _{0,\sigma})^*}

\DeclareMathOperator{\Ha}{\f L^2_{\sigma}}
\DeclareMathOperator{\Hb}{\f{H}^1_0}
\DeclareMathOperator{\Hc}{\f H^2}

\DeclareMathOperator{\He}{\f{H}^1}
\DeclareMathOperator{\Le}{\f{L}^2}
\DeclareMathOperator{\E}{\mathcal{E}}
\DeclareMathOperator{\W}{\mathcal{W}}

\DeclareMathOperator{\F}{\mathcal{F}}
\DeclareMathOperator{\Lap}{\Delta_{\f \Lambda}}

\DeclareMathOperator{\ra}{\rightarrow}
\DeclareMathOperator{\de}{\text{d}}
\DeclareMathOperator{\tr}{tr}


\newcommand{\pat}[2]{\frac{\partial #1}{\partial #2}}
\newcommand{\va}[1]{\frac{\delta \mathcal{F}}{\delta #1}}
\DeclareMathOperator{\di}{\nabla \cdot}

\DeclareMathOperator{\sym}{{sym}}
\DeclareMathOperator{\skw}{skw}
\newcommand{\sy}[1]{(\nabla \f #1)_{{\sym}}}
\newcommand{\sk}[1]{(\nabla \f #1)_{\skw}}
\renewcommand{\t}{\partial_t  }
\newcommand{\syn}[1]{(\nabla \f {#1}_n)_{\sym}}


\newcommand{\intt}[1]{\int_{0}^t\left({ #1}\right) \de s}
\newcommand{\intte}[1]{\int_{0}^t{ #1} \de s}

\newcommand{\intter}[1]{\int_{0}^T{ #1} \text{\emph{d}} t}
\newcommand{\inttes}[1]{\int_{0}^T{ #1} \text{d} t}
\newcommand{\inttu}[1]{\int_{0}^t\left ({ #1}\right ) \text{\emph{d}} s}

\newcommand{\intet}[1]{\int_{\Omega}{ #1} \de \f x}
\newcommand{\ov}[1]{\overline{#1}}

\newcommand{\vv}{\tilde{\f v}}
\newcommand{\dd}{\tilde{\f d}}
\newcommand{\syv}{(\nabla \tilde{\f v})_{{\sym}}}
\newcommand{\skv}{(\nabla \tilde{\f v})_{\skw}}
\renewcommand{\o}{\otimes}

\newtheorem{theorem}{Theorem}
\numberwithin{theorem}{section}
\numberwithin{equation}{section}
\newtheorem{lem}[theorem]{Lemma}
\newtheorem{rem}[theorem]{Remark}
\newtheorem{defi}[theorem]{Definition}


\author{Etienne Emmrich\thanks{%
        Technische Universit\"{a}t Berlin,
        Institut f\"{u}r Mathematik,
        Stra{\ss}e des 17.~Juni 136,
        10623 Berlin, Germany
        \newline{\tt emmrich@math.tu-berlin.de}
        }
\and    Robert Lasarzik\thanks{%
        Technische Universit\"{a}t Berlin,
        Institut f\"{u}r Mathematik,
        Stra{\ss}e des 17.~Juni 136,
        10623 Berlin, Germany
        \newline{\tt lasarzik@math.tu-berlin.de}
        }
}%
\title{Weak-strong uniqueness for the general Ericksen--Leslie system in three dimensions\footnote{This work was funded by CRC 901 {\em Control of self-organizing nonlinear systems: Theoretical methods and concepts of application} (Project A8)\/.
}}


\begin{document}
\markboth{Weak-strong uniqueness for the Ericksen--Leslie model}{E.~Emmrich and R.~Lasarzik}
\date{Version \today}
\maketitle
\begin{abstract} We study the Ericksen--Leslie system equipped with a quadratic free energy functional. The norm restriction of the director is incorporated by a standard relaxation technique using a double-well potential. We use the relative energy  concept, often applied in the context of compressible Euler- or related systems of fluid dynamics, to prove weak-strong uniqueness of solutions.
A main novelty is that the relative energy inequality is proved for a system with a nonconvex energy.
\newline
\newline
{\em Keywords:
Liquid crystal,
Ericksen--Leslie equation,
Existence,
Weak solution,
Weak-strong uniqueness,
}
\newline
{\em MSC (2010): 35Q35, 35K52, 35D30, 76A15
}
\end{abstract}
\setcounter{tocdepth}{1}
\tableofcontents
\section{Introduction\label{sec:int}}
This paper is devoted to the weak-strong uniqueness of weak solutions to the three-dimensional Ericksen--Leslie model describing liquid crystal flow. The Ericksen--Leslie model (proposed by Ericksen~\cite{Erick} and Leslie~\cite{leslie}) is  a very successful model for nematic liquid crystals and agrees with experiments (see~\cite[Sec.~11.1,~p.~463]{beris}).
The particular model strongly depends on the choice of the free energy.

Recently, global existence of weak solutions for a very general class of free energies was shown in~\cite{unsere}.
In this article, we prove weak-strong uniqueness of these solutions for a special, physically relevant free energy.
The weak-strong uniqueness property says that the weak solution coincides with a weak solution admitting additional regularity as long as the latter exists. We use the concept of relative energy (see Feiereisl, Jin and Novotn{\'y}~\cite{Feireislrelative}), which can also be used to consider other problems such as the stability of an equilibrium (see Feireisl~\cite{feireislstab}), singular limits for vanishing coefficients (see Breit, Feireisl and Hofmanova~\cite{breit} or Feireisl~\cite{feireislsingular}) or to derive a posteriori estimates for modeling errors (see Fischer~\cite{fischer}).
In the paper at hand, we generalize the relative energy approach to a model with a nonconvex energy.
\subsection{Review of known results}
A simplified Ericksen--Leslie model

\begin{align}
\begin{split}
 \t \f d + ( \f v \cdot \nabla ) \f d  & =  \Delta \f d
 +\frac{1}{\varepsilon^2}( |\f d |^2 -1 ) \f d \,,\\
\t \f v  + ( \f v \cdot \nabla ) \f v + \nabla p -  \Delta \f d
 & = -  \di \left ( \nabla \f d ^T \nabla \f d \right ),\\
 \di \f v &= 0 \,,
 \end{split}\label{simpleEL1}
\end{align}
was first considered in Lin and Liu~\cite{linliu1}, where global existence of weak solutions as well as local existence of strong solutions was shown. Later, Lin and Liu~\cite{linliu3} showed the same result for a generalized system. Existence of weak solutions to the model considered in the article at hand (see~\eqref{eq:strong} below) equipped with the Dirichlet energy with double-well potential
\begin{align*}
F(\f d ,\nabla \f d ) = \frac{k}{2}| \nabla \f d |^2 +\frac{1}{4\varepsilon}( | \f d |^2-1)^2\,.
\end{align*}
 was first proved in {Cavaterra}, {Rocca and  Wu}~\cite{allgemein}.
In~\cite{unsere} the existence of weak solutions to the model considered in the article at hand was proved for a more general class of free-energies.

The concept of weak-strong uniqueness was first considered by Prodi in 1959~(see~\cite{prodi}) and Serrin in 1962 (see~\cite{serrin}). Both studied the Navier--Stokes equation and showed weak-strong uniqueness for a class of weak solutions fulfilling  additional regularity requirements.

There is several work on the weak-strong uniqueness property for different simplifications of the Ericksen--Leslie model. Zhao and Liu~\cite{zhao} established weak-strong uniqueness for the simplified system~\eqref{simpleEL1}
 with different assumptions on the strong solution. Dai~\cite{dai2,dai1} established weak-strong uniqueness for a simplified incompressible model and a more general incompressible Ericksen--Leslie model with additional assumptions on the weak solution, which cannot be shown to hold in general.
Yang et\,al.~\cite{yang} showed the weak-strong uniqueness for the incompressible simplified Ericksen--Leslie system with no nonlinear penalization using ideas of Feireisl et\,al.~\cite{suitable} based upon relative entropy and suitable weak solutions. In the article at hand, we use similar ideas. However, we are able to incorporate the nonlinear part of the free energy in the relative entropy, which we call relative energy, and to show the weak-strong uniqueness without further assumptions on the weak solution.
This is done by adapting the relative energy to the nonconvex energy of the system.
A similar weak-strong uniqueness result for measure-valued solutions to the Ericksen--Leslie system equipped with the nonconvex Oseen--Frank energy (see~\cite{masse} for the existence of such solutions) was recently proved by the second author~\cite{weakstrong}.

\subsection{Notation\label{sec:not}}
Vectors of $\R^3$ are denoted by bold small Latin letters. Matrices of $\R^{3\times 3}$ are denoted by bold capital Latin letters.
Moreover, numbers are denoted be small Latin or Greek letters, and capital Latin letters are reserved for potentials.

The Euclidean inner product in $\R^3$ is denoted by a dot,
$ \f a \cdot \f b : = \f a ^T \f b = \sum_{i=1}^3 \f a_i \f b_i$  for $ \f a, \f b \in \R^3$.
The Frobenius inner product in the space $\R^{3\times 3}$ of matrices is denoted by a double dot, $ \f A: \f B:= \tr ( \f A^T \f B)= \sum_{i,j=1}^3 \f A_{ij} \f B_{ij}$ for $\f A , \f B \in \R^{3\times 3}$.
We also employ the corresponding Euclidean norm with $| \f a|^2 = \f a \cdot \f a$ for $ \f a \in \R^3$ and Frobenius norm with $ |\f A|^2=\f A:\f A$ for $\f A \in \R^{3\times 3}$.
The product of a fourth order with a second order tensor is defined by
\begin{align*}
\f \Gamma : \f A : =\left [ \sum_{k,l=1} ^3 \f \Gamma_{ijkl} \f A_{kl}\right ]_{i,j=1}^3, \quad    \f \Gamma \in \R^{3\times 3 \times 3 \times 3},  \,  \f A \in \R^{3\times 3 }  .
\end{align*}
The standard matrix and matrix-vector multiplication, however, is written without an extra sign for bre\-vi\-ty,
$$\f A \f B =\left [ \sum _{j=1}^3 \f A_{ij}\f B_{jk} \right ]_{i,k=1}^3 \,, \quad  \f A \f a = \left [ \sum _{j=1}^3 \f A_{ij}\f a_j \right ]_{i=1}^3\, , \quad  \f A \in \R^{3\times 3},\,\f B \in \R^{3\times3} ,\, \f a \in \R^3 .$$
The outer product is denoted by
$\f a \otimes \f b = \f a \f b^T = \left [ \f a_i  \f b_j\right ]_{i,j=1}^3$ for $\f a , \f b \in \R^3$. Note that
$\tr (\f a \otimes \f b  ) = \f a\cdot \f b$.
The symmetric and skew-symmetric part of a matrix are denoted by $\f A_{\sym}: = \frac{1}{2} (\f A + \f A^T)$ and
$\f A _{\skw} : = \frac{1}{2}( \f A - \f A^T)$ for $\f A \in \R^{3\times  3}$, respectively.

We use  the Nabla symbol $\nabla $  for real-valued functions $f : \R^3 \to \R$  as well as vector-valued functions $ \f f : \R^3 \to \R^3$  denoting
\begin{align*}
\nabla f := \left [ \pat{f}{\f x_i} \right ] _{i=1}^3\, ,\quad
\nabla \f f  := \left [ \pat{\f f _i}{ \f x_j} \right ] _{i,j=1}^3 \, .
\end{align*}
For brevity, we write $ \nabla \f f^T $ instead of $ ( \nabla \f f)^T$. The divergence of a vector-valued function $\f f:\R^3\ra \R^3$ and a matrix-valued function $\f A: \R^3 \ra \R^{3\times 3}$ is defined by
\begin{align*}
\di \f f := \sum_{i=1}^3 \pat{\f f _i}{\f x_i} = \tr ( \nabla \f f)\, , \quad  \di \f A := \left [\sum_{j=1}^3 \pat{\f A_{ij}}{\f x_j}\right] _{i=1}^3\, .
\end{align*}
Note that $( \f v\cdot \nabla ) \f f = ( \nabla \f f) \f v = \nabla \f f\, \f v $ for vector-valued functions $\f v$, $\f f: \R^3 \ra \R^3 $.

Throughout this paper, let $\Omega \subset \R^3$ be a bounded domain of class $\C^{3,1}$.
We rely on the usual notation for spaces of continuous functions, Lebesgue and Sobolev spaces. Spaces of vector-valued functions are  emphasized by bold letters, for example
$
\f L^p(\Omega) := L^p(\Omega; \R^3)$,
$\f W^{k,p}(\Omega) := W^{k,p}(\Omega; \R^3)$.
If it is clear from the context, we also use this bold notation for spaces of matrix-valued functions.
For brevity, we often omit calling the domain $\Omega$.
The standard inner product in $L^2 ( \Omega; \R^3)$ is denoted by
$ (\cdot \, , \cdot )$ and in $L^2 ( \Omega ; \R^{3\times 3 })$
by $(\cdot ; \cdot )$.

The space of smooth solenoidal functions with compact support is denoted by $\mathcal{C}_{c,\sigma}^\infty(\Omega;\R^3)$. By $\f L^p_{\sigma}( \Omega) $, $\V(\Omega)$,  and $ \f W^{1,p}_{0,\sigma}( \Omega)$, we denote the closure of $\mathcal{C}_{c,\sigma}^\infty(\Omega;\R^3)$ with respect to the norm of $\f L^p(\Omega) $, $ \f H^1( \Omega) $, and $ \f W^{1,p}(\Omega)$, respectively $(1\leq p<\infty)$.

The dual space of a Banach space $V$ is always denoted by $ V^*$ and equipped with the standard norm; the duality pairing is denoted by $\langle\cdot, \cdot \rangle$. The duality pairing between $\f L^p(\Omega)$ and $\f L^q(\Omega)$ (with $1/p+1/q=1$), however, is denoted by $(\cdot , \cdot )$ or $( \cdot : \cdot )$.

The Banach space of linear bounded operators mapping a Banach space $V$ into itself is denoted by $\mathcal{L}(V)$ and equipped with the usual norm.
For a given Banach space $ V$, Bochner--Lebesgue spaces are denoted, as usual,  by $ L^p(0,T; V)$. Moreover,  $W^{1,p}(0,T; V)$ denotes the Banach space of abstract functions in $ L^p(0,T; V)$ whose weak time derivative exists and is again in $ L^p(0,T; V)$ (see also
Diestel and Uhl~\cite[Section~II.2]{diestel} or
Roub\'i\v{c}ek~\cite[Section~1.5]{roubicek} for more details).
We often omit the time interval $(0,T)$ and the domain $\Omega$ and just write, e.g., $L^p(\f W^{k,p})$ for brevity.
By
 $ \C_w([0,T]; V)$, we denote the spaces of abstract functions mapping $[0,T]$ into $V$ that are
 continuous with respect to the weak topology in $V$.

By $\f \Lambda$, we denote a constant tensor of order 4 that is symmetric, i.e.,\,$\f \Lambda_{ijkl}=\f \Lambda _{klij} $, $ijkl\in\{1,2,3\}$, and obeys the strong ellipticity  condition  (see Mc Lean~\cite{mclean}), i.e.,\,there exists $\eta>0$ such that
\begin{align}
(\f a \otimes \f b ) : \f \Lambda :( \f a \otimes \f b ) \geq \eta | \f a |^2 | \f b|^2  \quad\text{for all }\f a, \f b\in\R^3\, .\label{elast}
\end{align}
We introduce the norm $\|\cdot\|_{\f \Lambda} : = \| \cdot : \f \Lambda :  \cdot\|_{L^1}^{{1}/{2}}$. The norm $\| \nabla \cdot \|_{\f \Lambda}$  is equivalent to the $\He$-norm on $\Hb$.
We use the abbreviation $\Delta_{\f \Lambda}\f d $ for the operator $ \di \f \Lambda : \nabla\f d $ for $\f d \in \Hc$.

Finally, by $c>0$, we denote a generic positive constant and by $C_\delta$ a constant depending on a given parameter  $\delta>0$.

\section{Model and main result\label{sec:mod}}
We consider the general Ericksen--Leslie system, which was investigated in~\cite{unsere}.
In comparison to the model in~\cite{unsere}, we consider a particular free energy function and reformulate the stress tensor. The system is given by
\begin{subequations}\label{eq:strong}
\begin{align}
\t {\f v}  + ( \f v \cdot \nabla ) \f v + \nabla p - \nabla \f d^T \f q - \di  \f T^L&= \f g, \label{nav}\\
\t {\f d }+ ( \f v \cdot \nabla ) \f d -\sk{v}\f d + \lambda \sy{v} \f d +\gamma \f q & =0,\label{dir}\\
\di \f v & = 0\, .
\end{align}%
\end{subequations}%
The vectorfields $\f v: \ov \Omega\times [0,T] \ra \R^3 $ and $\f d: \ov \Omega\times [0,T] \ra \R^3$ represent the velocity field and the director field, respectively. The pressure is denoted by $p: \ov \Omega\times [0,T] \ra \R$. In the article at hand, we do not address the problem of existence or uniqueness of the pressure.
For the free energy potential, we choose the function
\begin{align}
F( \f d , \nabla \f d ):= \frac{1}{2} \nabla \f d :\f \Lambda : \nabla \f d + \frac{1}{4\varepsilon}( | \f d |^2-1)^2\, .
\end{align}
Here $\f \Lambda$ is a constant symmetric fourth order tensor fulfilling the strong ellipticity condition~\eqref{elast} (see Mc Lean~\cite{mclean} and Section~\ref{sec:not}).
Moreover, $\varepsilon >0$ denotes the fixed parameter for the relaxation of the requirement $|\f d|=1$.
We do not address the question of the limit $\varepsilon\ra 0$. For such a singular limit analysis for $\varepsilon \ra 0$ in the context of the Ericksen--Leslie model, we refer to~\cite{weakstrong}.

The free energy is the functional induced by the free energy potential,
\begin{align}
\F(\f d) : = \intet{F( \f d , \nabla \f d)} = \frac{1}{2} \| \nabla \f d\|_{\f \Lambda }^2 + \frac{1}{4\varepsilon }\left \| |\f d|^2-1\right \|_{L^2}^2\, .\label{F}
\end{align}
The vector $\f q$ is the variational derivative of the free energy,
\begin{align}
\f q : = \va{\f d } = -\Delta_{\f \Lambda} \f d + \frac{ 1}{\varepsilon}{ (| \f d |^2 - 1) \f d}\, .\label{q}
\end{align}
For the definition of the operator~$\Delta_{\f \Lambda}$, see Section~\ref{sec:not}.
In comparison to the system studied in~\cite{unsere}, the divergence of the Ericksen stress given by $\di \f T^E= \di \left ( \nabla \f d^T (\partial F/ \partial \nabla \f d) \right ) $ is replaced by $-\nabla \f d ^T \f q$. This reformulation is valid due to the integration-by-parts formula
\begin{align*}
\left ( \di \f T^E , \f \varphi \right ) = - \left ( \nabla \f d^T \f q , \f \varphi \right ) + \left ( \nabla F , \f \varphi \right ) \
\end{align*}
derived in~\cite[Section~3.3]{unsere} that holds for every test function $\f \varphi\in \C_c^\infty(\Omega;\R^3 )$. Hence, via a reformulation, the term $ F$ can be incorporated in the pressure and one ends up with the formulation~\eqref{eq:strong}.
The Leslie stress tensor is given by
\begin{align}
\begin{split}
\f T^L:=\,&  \mu_1 (\f d\cdot \sy  v \f d )\f d \otimes \f d +\mu_4 \sy v
-\gamma (\mu_2+\mu_3) {}\left (\f d \otimes \f q    \right )_{\sym} + \left (\f d \otimes \f q  \right )_{\skw} \\& + ((\mu_5+\mu_6) -\lambda(\mu_2+\mu_3))  \left ( \f d\otimes  \sy v\f d\right )_{\sym} ,
\end{split}\label{leslie}
\end{align}
Note that in view of~\eqref{dir}, the formulation~\eqref{leslie} is equivalent to the formulation of the Leslie stress in~\cite{unsere}.
In order to assure the dissipative character of the system, we assume that the parameters $\lambda$, $\gamma$, $\mu_1$, $\mu_2$, $\mu_3$, $\mu_4$, $\mu_5$, and $\mu_6$ satisfy
\begin{align}
\begin{split}
 \mu_1 & > 0, \quad \mu_4 > 0,  \quad  \gamma > 0 ,\quad (\mu_5+\mu_6)- \lambda (\mu_2+\mu_3)>0    \\  4 &\gamma ( (\mu_5+\mu_6)- \lambda (\mu_2+\mu_3))> ( \gamma (\mu_2+\mu_3) -\lambda)^2\,.
 \end{split}\label{con}
\end{align}
Finally, we assume that
$\f g\in    L^2 ( 0,T; (\V)^*)
$.
We equip the system with initial conditions and Dirichlet boundary conditions such that
\begin{subequations}\label{boundary}
\begin{align}
\f v(\f x, 0) &= \f v_0 (\f x) \quad\text{for } \f x \in \Omega ,& \quad \f v (  \f x, t ) &= \f 0 \quad &\text{for }( t,  \f x ) \in [0,T] \times \partial \Omega , \\
\f d (  \f x, 0 ) & = \f d_0 ( \f x) \quad\text{for } \f x \in \Omega , & \quad \f d (  \f x ,t ) & = \f d_1\quad &\text{for }( t,  \f x ) \in [0,T] \times \partial \Omega .
\end{align}
\end{subequations}
We always assume that $\f d_0=\f d_1$ on $\partial \Omega$, which is a compatibility condition providing regularity. For the initial and boundary values, we assume the regularity
\begin{align}
\f v_0 \in \Ha\,,\quad \f d_0 \in \He \, , \text{ and}\quad \f d_1 \in \f H^{3/2}(\partial\Omega)\,.\label{Rand}
\end{align}
\begin{defi}\label{def:weak}
The pair $( \f v ,\f d )$ is said to be a \textit{weak solution} to system~(\ref{eq:strong})--(\ref{Rand}) if
 \begin{align}
\begin{split}
\f v &\in L^\infty(0,T;\Ha )\cap  L^2(0,T;\V ) \cap W^{1,{2}}(0,T; ( \f W^{1,6}_{0,\sigma} )^*),
\\ \f d& \in L^\infty(0,T;\He )\cap  L^2(0,T;\Hc ) \cap W^{1,2}  (0,T;   \f  L^{{3/2}}  ),
\end{split}\label{weakreg}
\end{align}
and
\begin{subequations}\label{weak}
\begin{align}
\begin{split}
\int_0^T \langle\t \f v, \f \varphi\rangle \text{\emph{d}} s + \int_0^T ((\f v\cdot \nabla) \f v, \f \varphi) \text{\emph{d}}s  - \intter{\left \langle\nabla
\f d^T \f q  ,  \f \varphi
\right \rangle}+ \intter{(\f T^L: \nabla \f \varphi ) }-\intter{ \left \langle \f g ,\f \varphi\right \rangle } ={}&0 ,\quad
\end{split}\label{eq:velo}\\\begin{split}
\intter{( \t \f d, \f \psi(t) ) }  +\intter{\left (((\f v \cdot \nabla ) \f d, \f \psi)- \left (\sk{v}\f d , \f \psi\right ) + \lambda \left (\sy{v} \f d , \f \psi\right )+ \gamma\left\langle\f q, \f \psi\right\rangle\right )}={}&0
\end{split}
\label{eq:dir}
\end{align}%
\end{subequations}
for all test functions $\f \varphi\in L^{2}(0,T;\f W^{1,6}_{0,\sigma})$ and $\f \psi \in L^2(0,T;\f L^{3})$.
\end{defi}
The global existence of weak solutions was proved under the given assumptions~(\ref{eq:strong})--(\ref{Rand}) in~\cite[Theorem~3.1]{unsere} for a domain of class $\C^2$.

\begin{defi}\label{def:suit}
A weak solution $( \f v ,\f d )$ (see Definition~\ref{def:weak}) is said to be a \textit{suitable weak solution} to system~\eqref{eq:strong} if it is a weak solution and additionally satisfies the energy inequality
\begin{align}
\begin{split}
 \frac{1}{2}\|\f v( t) \|_{\Le}^2 &+ \F( \f d( t)) + \inttu{\mu_1\|\f d\cdot \sy{v}\f d\|_{L^2}^2+\mu_4 \|\sy{v}\|_{\Le}^2 } \\
& \quad +\inttu{
   ( \mu_5+\mu_6-\lambda(\mu_2+\mu_3))\|\sy{v}\f d\|_{\Le}^2+ \gamma \|\f q\|_{\Le}^2}\\
& \leq   \frac{1}{2}\|\f v_0 \|_{\Le}^2 + \F( \f d_0) + \inttu{  \langle \f g , \f v \rangle + ( \gamma ( \mu_2+ \mu_3) - \lambda ) \left ( \f q , \sy{v} \f d \right ) }\,
\end{split}
\label{energyin}
\end{align}
for almost all $t\in(0,T)$.
\end{defi}
\begin{defi}\label{def:strong}
A weak solution $( \vv, \dd)$  (see Definition~\ref{def:weak}) is said to be a \textit{strong solution} to~\eqref{eq:strong} if  it admits the additional regularity
\begin{align}
\begin{split}
\vv \in L^2 (0,T; \f W^{1,6} ) \, , \quad \dd \in L^\infty
(0,T ; \f L^{12} )\cap L^2(0,T; \f W^{2,3} )  \,,\quad
\dd \cdot \syv \dd \in L^2(0,T; \f L^6)\,.
\end{split}\label{strsol}
\end{align}
\end{defi}
\begin{rem}
For $\mu_1=0$ it would be sufficient to assume the regularity $\vv  \in L^2(0,T; \f W^{1,3}  \cap \f L^\infty  )$ and  $ \syv \dd \in L^2(0,T; \f L^3)$  instead of~$\vv \in L^2 (0,T; \f W^{1,6} )$ and $\dd \cdot \syv \dd \in L^2(0,T; \f L^6)$.
\end{rem}
We can now state the main theorem of this paper.
\begin{theorem}
\label{thm}
Let $\Omega\subset \R^3$ be a domain of class $\C^{2}$. Let $( \f v , \f d)$ be a suitable weak solution~(see Definition~\ref{def:suit}) to the Ericksen--Leslie system~(\ref{eq:strong})--(\ref{Rand}) and $( \vv,\dd)$ a strong solution (see Definition~\ref{def:strong}) to the same initial and boundary conditions~\eqref{boundary}--\eqref{Rand}.

Then
\begin{align*}
\f v \equiv \vv \, ,\quad \f d \equiv \dd \, .
\end{align*}
\end{theorem}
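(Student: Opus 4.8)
\emph{Overall strategy.} The proof runs along the relative-energy (relative-entropy) method of Feireisl, Jin and Novotn\'y~\cite{Feireislrelative}; the new ingredient is to carry it through although the energy $\F$ in \eqref{F} is nonconvex. Note first that $\f d-\dd$ vanishes on $\partial\Omega$ (both directors attain the time-independent boundary value $\f d_1$, so also $\partial_t(\f d-\dd)=0$ there). With $\tilde{\f q}:=-\Delta_{\f\Lambda}\dd+\tfrac1\varepsilon(|\dd|^2-1)\dd$, define the \emph{relative energy}
\[
\mathcal R(t):=\tfrac12\|(\f v-\vv)(t)\|_{\Le}^2+\F(\f d(t))-\F(\dd(t))-\big(\tilde{\f q}(t),(\f d-\dd)(t)\big),
\]
the Bregman distance of $\F$ plus the kinetic part. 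Expanding with \eqref{F}, \eqref{q} and integrating the term $(-\Delta_{\f\Lambda}\dd,\f d-\dd)$ by parts (the boundary term drops out), one finds the closed form
\[
\mathcal R(t)=\tfrac12\|\f v-\vv\|_{\Le}^2+\tfrac12\|\nabla(\f d-\dd)\|_{\f\Lambda}^2+\tfrac1{4\varepsilon}\big\||\f d|^2-|\dd|^2\big\|_{L^2}^2+\tfrac1{2\varepsilon}\int_\Omega(|\dd|^2-1)|\f d-\dd|^2\,\de\f x.
\]
The last term is sign-indefinite — this is precisely where the nonconvexity of $\F$ appears — and it is neutralized by adding $\tfrac1{2\varepsilon}\|\f d-\dd\|_{\Le}^2$: since $(|\dd|^2-1)+1=|\dd|^2\ge0$,
\[
\mathcal R(t)+\tfrac1{2\varepsilon}\|(\f d-\dd)(t)\|_{\Le}^2=\tfrac12\|\f v-\vv\|_{\Le}^2+\tfrac12\|\nabla(\f d-\dd)\|_{\f\Lambda}^2+\tfrac1{4\varepsilon}\big\||\f d|^2-|\dd|^2\big\|_{L^2}^2+\tfrac1{2\varepsilon}\int_\Omega|\dd|^2|\f d-\dd|^2\,\de\f x\ \ge\ 0,
\]
and since $\|\nabla\cdot\|_{\f\Lambda}$ is equivalent to the $\He$-norm on $\Hb$, this quantity is coercive: it controls $\|\f v-\vv\|_{\Le}^2+\|\f d-\dd\|_{\He}^2+\||\f d|^2-|\dd|^2\|_{L^2}^2$.

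\emph{The relative energy inequality.} The plan is to prove, for a.e.\ $t\in(0,T)$,
\[
\Big(\mathcal R+\tfrac1{2\varepsilon}\|\f d-\dd\|_{\Le}^2\Big)(t)+\int_0^t\mathcal D(s)\,\de s\ \le\ \int_0^t\mathcal K(s)\Big(\mathcal R+\tfrac1{2\varepsilon}\|\f d-\dd\|_{\Le}^2\Big)(s)\,\de s,
\]
where $\mathcal D\ge0$ is the \emph{relative dissipation} — the quadratic form in $(\nabla(\f v-\vv))_{\sym}$, $\dd\cdot(\nabla(\f v-\vv))_{\sym}\dd$, $(\nabla(\f v-\vv))_{\sym}\dd$ and $\f q-\tilde{\f q}$ coming from the left-hand side of \eqref{energyin}, positive definite exactly under \eqref{con} — and $\mathcal K\in L^1(0,T)$ depends only on norms of the strong solution. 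This is assembled from: (i) the energy inequality \eqref{energyin} for the suitable weak solution $(\f v,\f d)$; (ii) the energy \emph{equality} for $(\vv,\dd)$, which is available because \eqref{strsol} renders $\vv\in L^2(0,T;\f W^{1,6}_{0,\sigma})$ and $\tilde{\f q}\in L^2(0,T;\f L^3)$ admissible as test functions in \eqref{weak}; and (iii) the time-derivative identities for the cross term $(\f v,\vv)$, for $(\tilde{\f q},\f d-\dd)$ and for the remaining $\dd$-terms, each obtained by inserting a suitable function built from one solution into the weak formulation \eqref{weak} of the other. In step (iii) every spatial derivative of the director is moved onto $\dd$ by integration by parts (the boundary terms vanish because $\partial_t(\f d-\dd)=0$ on $\partial\Omega$), and the extra regularity \eqref{strsol} — which in particular upgrades $\partial_t\dd$ to $L^2(0,T;\Le)$ and gives $\Delta_{\f\Lambda}\dd\in L^2(0,T;\f L^3)$ and a Leslie stress of the strong solution in $L^2(0,T;\Le)$ — makes all resulting pairings well defined; in particular the $L^2(0,T;\f W^{1,6}_{0,\sigma})$-weak formulation of the $\vv$-equation extends by density to test functions in $L^2(0,T;\V)$, which permits inserting $\f v$. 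The needed integration-by-parts-in-time identities follow from a routine mollification argument (cf.\ Roub\'i\v{c}ek~\cite{roubicek}).

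\emph{Closing the estimate and conclusion.} It then remains to bound the right-hand side of the assembled identity — the relative convective terms, the relative Ericksen term from $\nabla\f d^T\f q-\nabla\dd^T\tilde{\f q}$, and the relative Leslie stress — by $\int_0^t\mathcal K\,(\mathcal R+\tfrac1{2\varepsilon}\|\f d-\dd\|_{\Le}^2)\,\de s$ plus a small fraction of $\int_0^t\mathcal D$. The convective contributions are handled just as for the Navier--Stokes equation: after the usual antisymmetry cancellations one is left with trilinear terms in $(\f v-\vv,\nabla(\f v-\vv),\nabla\vv)$ and in $(\f d-\dd,\nabla(\f d-\dd),\nabla\dd)$, controlled by H\"older, the three-dimensional Sobolev embeddings ($\f W^{1,6}\hookrightarrow\f L^\infty$, $\f W^{2,3}\hookrightarrow\f L^\infty$, etc.) and Gagliardo--Nirenberg. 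For the director balance, the cubic term $\tfrac1\varepsilon(|\f d|^2-1)\f d$ in $\f q$ must be grouped with its $\dd$-analogue so that the difference regenerates precisely the quantities $\tfrac1{4\varepsilon}\||\f d|^2-|\dd|^2\|_{L^2}^2$ and $\tfrac1{2\varepsilon}\int_\Omega|\dd|^2|\f d-\dd|^2\,\de\f x$ of the modified relative energy, up to remainders dominated by $\mathcal R$; the term $\mu_1(\f d\cdot(\nabla\f v)_{\sym}\f d)\,\f d\o\f d$ in \eqref{leslie} is where the assumptions $\dd\cdot\syv\dd\in L^2(0,T;\f L^6)$ and $\dd\in L^\infty(0,T;\f L^{12})$ are used, mirroring the dissipative term $\mu_1\|\f d\cdot(\nabla\f v)_{\sym}\f d\|_{L^2}^2$ in \eqref{energyin}. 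With the relative energy inequality at hand, $\f v_0=\vv_0$ and $\f d_0=\dd_0$ give $(\mathcal R+\tfrac1{2\varepsilon}\|\f d-\dd\|_{\Le}^2)(0)=0$, so Gronwall's lemma with $\mathcal K\in L^1(0,T)$ yields $(\mathcal R+\tfrac1{2\varepsilon}\|\f d-\dd\|_{\Le}^2)(t)=0$ for all $t\in[0,T]$; coercivity then gives $\f v\equiv\vv$ and $\nabla(\f d-\dd)\equiv0$, hence $\f d\equiv\dd$ by the zero boundary trace.

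\emph{Main obstacle.} The heart of the proof is the director part of the relative-energy inequality: making the nonconvex quartic term of $\F$ close up in the relative-energy framework. In the classical situation the energy is convex and its Bregman distance is automatically a good quantity; here one must (a) identify the correct, a priori indefinite relative energy and neutralize its negative contribution, and (b) verify that the nonlinear $\f q$-coupling between \eqref{nav} and \eqref{dir} — the terms $\sk{v}\f d$, $\sy{v}\f d$, $\nabla\f d^T\f q$ and the Leslie stress — produces only quantities of the form $\int\mathcal K\,\mathcal R$ or quantities absorbable into $\mathcal D$, whose positive definiteness itself relies on \eqref{con}. A secondary, more technical obstacle is that the weak solution enjoys only the regularity \eqref{weakreg}, so the choice of cross test functions and the integrations by parts in space and in time must be arranged to avoid any ill-defined pairing, which is what forces the repeated use of the strong solution's extra regularity \eqref{strsol}.
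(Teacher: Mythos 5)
Your proposal is correct and follows the same route as the paper: combine the suitable weak solution's energy inequality \eqref{energyin} with the strong solution's energy equality (obtained by testing \eqref{weak} with $(\vv,\tilde{\f q})$, as in Lemma~\ref{eneq}), treat the mixed terms by integration by parts in time and by cross-testing the two weak formulations (justified by the extra regularity of Lemma~\ref{reg}), absorb the quadratic remainder into the relative dissipation using \eqref{con} together with Korn/Poincar\'e/Sobolev, and conclude by Gronwall. The one genuine difference is the choice of relative energy. You start from the Bregman distance of the nonconvex $\F$ and repair its indefiniteness by adding $\tfrac{1}{2\varepsilon}\|\f d-\dd\|_{\Le}^2$; your closed-form computation is correct, and the resulting functional equals the paper's $\E$ from \eqref{E} plus the nonnegative extra term $\tfrac{1}{2\varepsilon}\int_\Omega|\dd|^2|\f d-\dd|^2\,\de\f x$ (since $\tfrac1{4\varepsilon}\|(|\f d|^2-1)-(|\dd|^2-1)\|_{L^2}^2=\tfrac1{4\varepsilon}\||\f d|^2-|\dd|^2\|_{L^2}^2$). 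Your variant makes the connection to the convex-energy theory and the nonnegativity/coercivity manifest from the outset, which is conceptually attractive; the price is that the time evolution of the additional term must also be computed and estimated, producing further error terms that are controllable with the same ingredients (notably $\dd\in L^\infty(0,T;\f L^{12})$ and $\t\dd\in L^1(0,T;\f L^3)$, cf.\ Lemma~\ref{reg}) but enlarge $\mathcal{K}$. The paper instead works directly with the leaner functional $\E$, so the nonconvex contributions never sit on the left-hand side: they appear only as error terms (the terms $I_8$, $J_1$, $J_2$ in the proof of Lemma~\ref{lem2}) bounded by $\mathcal{K}\,\E$ plus a small multiple of $\W$. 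Note also that the paper proves the slightly stronger statement of Lemma~\ref{lem2} (continuous dependence on the initial data, $\E(t)\leq\E(0)\,e^{\int_0^t\mathcal K}$), from which Theorem~\ref{thm} follows, whereas you specialize at once to equal initial data; your sketch otherwise identifies all the essential steps and the needed regularity correctly.
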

\begin{rem}
Theorem~\ref{thm} is a direct consequence of Lemma~\ref{lem2}. In Lemma~\ref{lem2}, even the continuous dependence on the initial values is shown as long as a strong solution exists.
\end{rem}
Before we present the proof of the main result, we give an important remark on the existence of suitable weak solutions.

\begin{rem}[Existence of suitable weak solutions]
In our recent work~\cite{unsere}, we proved global existence of weak solutions to the system~\eqref{eq:strong} in the sense of Definition~\ref{def:weak}. This is done by establishing a Galerkin approximation leading to an approximate system whose solutions $(\f v_n ,\f d_n)$ obey the energy equality as in~\eqref{energyeq}.
This allows us to show a priori estimates for the sequence of solutions to the approximate system and extract weakly- and weakly$^*$-converging subsequences.  In the end, it is possible to identify the limit of these subsequences with the solution $(\f v, \f d)$ to~\eqref{weak}. It turns out that the energy inequality~\eqref{energyin} cannot be shown to hold for the limit.
The a priori estimates for the approximate system~(see~\cite{unsere}) imply the following weak convergences
\begin{align*}
  \f v_{n }&\stackrel{*}{\rightharpoonup}  	\f v& \quad &\text{ in } L^{\infty} (0,T;\Ha)\cap L^{2} (0,T;\V)\cap W^{1,{2}}(0,T; ( \f W^{1,6}_{0,\sigma} )^*) \,,
  \\
\f q_n &\rightharpoonup  {\f q} &\quad &\text{ in }  L^{2} (0,T;\Le)\,,
\\
\syn v \f d_n &\rightharpoonup  \sy v \f d& \quad &\text{ in }  L^{2} (0,T;\Le)\,,
\\
\f d_n\cdot \syn v \f d_n &\rightharpoonup  \f d\cdot \sy v \f d &\quad &\text{ in }  L^{2} (0,T;L^2)\,,
\\
\f d_{n }&\stackrel{*}{\rightharpoonup}  		\f d &\quad& \text{ in } L^{\infty} (0,T;\He)\cap  L^{2} (0,T;\Hc)\cap W^{1,2}  (0,T;   \f  L^{{3/2}}  )\,.
\end{align*}
Due to the weakly lower semi-continuity of the appearing norms, one can deduce that
\begin{align}
\begin{split}\label{weaklow}
&\liminf_{n\ra \infty} \Big ( \frac{1}{2}\|\f v_n(t) \|_{\Le}^2 + \F( \f d_n(t)) + \inttu{\mu_1\|\f d_n\cdot \syn{v}\f d_n\|_{L^2}^2+\mu_4 \|\syn{v}\|_{\Le}^2 } \\
& \quad +\inttu{
   ( \mu_5+\mu_6-\lambda(\mu_2+\mu_3))\|\syn{v}\f d_n\|_{\Le}^2 +\gamma \|\f q_n\|_{\Le}^2 - \langle \f g , \f v _n\rangle }\Big ) \\
 &\geq \Big ( \frac{1}{2}\|\f v (t)\|_{\Le}^2 + \F( \f d( t)) + \inttu{\mu_1\|\f d\cdot \sy{v}\f d\|_{L^2}^2+ \mu_4 \|\sy{v}\|_{\Le}^2 } \\
& \quad +\inttu{
  ( \mu_5+\mu_6-\lambda(\mu_2+\mu_3))\|\sy{v}\f d\|_{\Le}^2 + \gamma \|\f q\|_{\Le}^2 - \langle \f g , \f v \rangle}\Big )
\, .
\end{split}
\end{align}
Note that $\f v_n \in \mathcal{C}_w([0,T]; \Ha)$ and $\f d_n \in \mathcal{C}_w ([0,T];\He)$.
However, we are not able to identify the limit of the remaining term $( \f q_n , \syn v \f d_n )$ since $\f q_n$ and $\syn v \f d _n $ only converge weakly. Thus, it is not clear weather a suitable solution in the sense of Definition~\ref{def:suit} exists.

Nevertheless, the existence of a suitable solution (see Definition~\ref{def:suit}) can be shown when assuming Parodi's relation $\gamma (\mu_2+\mu_3) = \lambda  $. Then the last term in the energy inequality~\eqref{energyin} vanishes, and with~\eqref{weaklow} the energy inequality also holds for the limit of the approximate solutions, which is the weak solution.
\end{rem}

\section{Properties of the strong solution}

\begin{lem}[Energy equality]\label{eneq}
A strong solution $(\vv,\dd) $ (see Definition~\ref{def:strong}) of the system~\eqref{eq:strong} fulfills the energy equality
\begin{align}
\begin{split}
 \frac{1}{2}\|\vv(t) \|_{\Le}^2 &+ \F( \dd(t)) + \inttu{\mu_1\|\dd\cdot \syv\dd\|_{L^2}^2+ \mu_4 \|\syv\|_{\Le}^2} \\
& \quad +\inttu{
   ( \mu_5+\mu_6-\lambda(\mu_2+\mu_3))\|\syv\dd\|_{\Le}^2 +\gamma \|\tilde{\f q}\|_{\Le}^2}\\
& =   \frac{1}{2}\|\vv_0 \|_{\Le}^2 + \F( \dd_0)  + \inttu{  \langle \f g , \vv \rangle + ( \gamma ( \mu_2+ \mu_3) - \lambda ) \left ( \tilde{\f q} , \sy{v} \f d \right ) }\,
\end{split}\label{energyeq}
\end{align}
for $t\in [0,T]$.

\end{lem}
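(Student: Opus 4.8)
The plan is to derive the energy equality by testing the equations of the strong solution against the solution itself — the natural "physical" multipliers — and then carefully collecting the coupling terms that arise from the Ericksen stress, the Leslie stress, and the director equation, so that the algebra matches the right-hand side of~\eqref{energyeq}. First I would justify that, thanks to the extra regularity~\eqref{strsol}, the strong solution $(\vv,\dd)$ is an admissible test function in its own weak formulation~\eqref{weak}: one needs $\vv\in L^2(0,T;\f W^{1,6}_{0,\sigma})$ to use it in~\eqref{eq:velo} and, say, $\tilde{\f q}\in L^2(0,T;\f L^2)$ together with the time-derivative regularity to use a suitable multiplier in~\eqref{eq:dir}. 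Concretely I would test~\eqref{nav} with $\vv$ and test~\eqref{dir} with $\tilde{\f q}$ (the variational derivative), the latter being the device that produces $\br{}\F(\dd)$ via the chain rule $\langle\tilde{\f q},\t\dd\rangle=\br{}\F(\dd)$, which is where the nonconvex double-well term gets handled cleanly since $\tilde{\f q}=\delta\F/\delta\dd$. Integrating $\langle\t\vv,\vv\rangle$ and $\langle\t\dd,\tilde{\f q}\rangle$ in time gives the $\tfrac12\|\vv(t)\|_{\Le}^2$, $\F(\dd(t))$ and the corresponding initial terms.

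Next I would match the nonlinear terms. The convective terms $((\vv\cdot\nabla)\vv,\vv)$ and $((\vv\cdot\nabla)\dd,\tilde{\f q})$ must be shown to cancel or vanish: $((\vv\cdot\nabla)\vv,\vv)=0$ by $\di\vv=0$ and the Dirichlet condition, while the term $\langle\nabla\dd^T\tilde{\f q},\vv\rangle$ coming from the Ericksen stress in~\eqref{eq:velo} is designed to cancel precisely with $((\vv\cdot\nabla)\dd,\tilde{\f q})=(\nabla\dd\,\vv,\tilde{\f q})=(\vv,\nabla\dd^T\tilde{\f q})$ from~\eqref{eq:dir} — this is the whole point of the reformulation explained after~\eqref{q}. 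Then I would expand $(\f T^L:\nabla\vv)$ using~\eqref{leslie}: the $\mu_1$-term gives $\mu_1\|\dd\cdot\syv\dd\|_{L^2}^2$, the $\mu_4$-term gives $\mu_4\|\syv\|_{\Le}^2$, and the remaining $\dd\otimes\tilde{\f q}$ and $\dd\otimes\syv\dd$ contributions must be combined with the $-\sk{v}\dd+\lambda\sy{v}\dd+\gamma\tilde{\f q}$ terms tested against $\tilde{\f q}$ in~\eqref{eq:dir}. Using the identities $(\f A_{\skw}:\f B)=(\f A:\f B_{\skw})$, $\tr(\f a\otimes\f b)=\f a\cdot\f b$, and $(\skv\dd,\tilde{\f q})$ paired against the skew part of $\f T^L$, one sees the skew-symmetric couplings cancel, the $\gamma\|\tilde{\f q}\|_{\Le}^2$ dissipation appears from testing $\gamma\tilde{\f q}$ against $\tilde{\f q}$, the $(\mu_5+\mu_6-\lambda(\mu_2+\mu_3))\|\syv\dd\|_{\Le}^2$ term assembles from the symmetric $\dd\otimes\syv\dd$ contribution plus the $\lambda\sy v\dd$ term, and the leftover cross term is exactly $-(\gamma(\mu_2+\mu_3)-\lambda)(\tilde{\f q},\syv\dd)$, moved to the right-hand side. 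This bookkeeping — tracking which pieces of the Leslie stress pair with which term of the director equation and verifying that the skew parts and the $\nabla F$-type artifacts cancel — is the part that requires the most care.

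The main obstacle I anticipate is not any single inequality but the rigorous justification that all these formal manipulations are legitimate: that each product (e.g.\ $(\vv\cdot\nabla)\vv\cdot\vv$, $\nabla\dd^T\tilde{\f q}\cdot\vv$, $\f T^L:\nabla\vv$, $(\t\dd,\tilde{\f q})$) is in $L^1((0,T)\times\Omega)$ under~\eqref{strsol}, so that the integration by parts in space and the fundamental-theorem-of-calculus step in time hold. For instance $\langle\t\dd,\tilde{\f q}\rangle$ needs $\t\dd\in L^2(\f L^{3/2})$ paired with $\tilde{\f q}\in L^2(\f L^3)$ — which follows from $\dd\in L^2(\f W^{2,3})$ and $\dd\in L^\infty(\f L^{12})$ controlling the cubic term — and $\f T^L:\nabla\vv$ needs, e.g., $\dd\cdot\syv\dd\in L^2(\f L^6)$ and $\dd\in L^\infty(\f L^{12})$ to make the $\mu_1$-term integrable against $\nabla\vv\in L^2(\f L^6)$ via Hölder. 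I would therefore (i) record these Hölder/interpolation bounds, (ii) establish the chain rule $\tfrac{\de}{\de t}\F(\dd(t))=\langle\tilde{\f q},\t\dd\rangle$ for $\dd$ with the stated regularity (a standard approximation/mollification argument, or an appeal to a Lions–Magenes type lemma adapted to the nonconvex potential, noting $\F$ is $C^1$ on $\He\cap\f L^4$), and (iii) assemble the terms as above to land exactly on~\eqref{energyeq}, with equality (rather than inequality) because the strong solution's regularity makes every step reversible.
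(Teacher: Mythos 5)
Your proposal is correct and follows essentially the same route as the paper: the paper's proof consists precisely of taking $(\vv,\tilde{\f q})$ as test functions in the weak formulation~\eqref{weak}, justified by the regularity~\eqref{strsol}, and then invoking the computation of \cite[Proposition~4.1]{unsere} for the algebraic bookkeeping (cancellation of the Ericksen/convection terms, assembly of the dissipation terms, and the residual $(\gamma(\mu_2+\mu_3)-\lambda)(\tilde{\f q},\syv\dd)$ term) that you write out explicitly. Your additional attention to the chain rule $\frac{\de}{\de t}\F(\dd)=(\tilde{\f q},\t\dd)$ and to the H\"older-integrability of each product is exactly the content hidden in the paper's phrase ``due to the regularity assumptions~\eqref{strsol}''.
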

\begin{proof}
Due to the regularity assumptions~\eqref{strsol} on the strong solution, we can take $( \vv , \tilde{\f q})$ as test functions  in~\eqref{weak} and obtain the energy equality in the same way as in~\cite[Proposition~4.1]{unsere}.
\end{proof}
\begin{lem}[Regularity of the strong solution]\label{reg}
A strong solution~$( \vv, \dd)$ (see Definition~\ref{def:strong}) admits the regularity
\begin{align}
\t \vv \in L^2 (0,T; (\V )^*)\, ,\quad
\t \dd \in L^1(0,T; \f L^3 )\, .
\label{regtilde}
\end{align}
\end{lem}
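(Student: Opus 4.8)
The plan is to solve the two equations of the system for the respective time derivatives and to estimate the right-hand sides term by term, exploiting the additional regularity~\eqref{strsol}. No information on the pressure is needed, since in the weak formulation only solenoidal test functions occur, so $\t\vv$ naturally lives in $(\V)^*$. Throughout I would use that, $\Omega$ being bounded, $\V\hookrightarrow\f L^6$, hence $\f L^r\hookrightarrow(\V)^*$ for every $r\ge 6/5$, together with the Sobolev embedding $\f W^{2,3}(\Omega)\hookrightarrow\f W^{1,6}(\Omega)$ in three dimensions; since $\f W^{1,6}_{0,\sigma}$ is dense in $\V$, it then suffices to control $\t\vv$ as a functional on $\V$.

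For $\t\vv$, reading off~\eqref{eq:velo} gives, for a.e.\ $t\in(0,T)$ and every solenoidal $\f\varphi$,
\[
\langle\t\vv,\f\varphi\rangle=-((\vv\cdot\n)\vv,\f\varphi)+\langle\n\dd^T\tilde{\f q},\f\varphi\rangle-(\f T^L:\n\f\varphi)+\langle\f g,\f\varphi\rangle .
\]
For the convective term I would \emph{not} integrate by parts but keep $(\vv\cdot\n)\vv=\n\vv\,\vv$ and combine $\vv\in L^2(0,T;\f W^{1,6})$ with $\vv\in L^\infty(0,T;\Ha)$, which by Hölder's inequality yields $\|(\vv\cdot\n)\vv\|_{\f L^{3/2}}\le\|\n\vv\|_{\f L^6}\|\vv\|_{\Le}$, a product of an $L^2(0,T)$- and an $L^\infty(0,T)$-function; this is the one place where the strong-solution regularity is indispensable, a mere weak solution only providing $\n\vv\in L^2(\Le)$. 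For the Ericksen term I would first locate $\tilde{\f q}=-\Lap\dd+\varepsilon^{-1}(|\dd|^2-1)\dd$ in $L^2(0,T;\f L^3)$, using $\dd\in L^2(0,T;\f W^{2,3})$ for $\Lap\dd$ and $\dd\in L^\infty(0,T;\f L^{12})$ for the cubic term, and then use $\n\dd\in L^\infty(0,T;\Le)$ to obtain $\|\n\dd^T\tilde{\f q}\|_{\f L^{6/5}}\le\|\n\dd\|_{\Le}\|\tilde{\f q}\|_{\f L^3}\in L^2(0,T)$. For the Leslie stress I would run through the five summands of~\eqref{leslie} and check, by Hölder's inequality and the regularities $\dd\in L^\infty(0,T;\f L^{12})$, $\syv\in L^2(0,T;\f L^6)$, $\dd\cdot\syv\dd\in L^2(0,T;\f L^6)$ and $\tilde{\f q}\in L^2(0,T;\f L^3)$, that $\f T^L\in L^2(0,T;\Le)$, so that $|(\f T^L:\n\f\varphi)|\le\|\f T^L\|_{\Le}\|\f\varphi\|_{\He}$ with an $L^2(0,T)$-coefficient; note that here one really needs $\f T^L$ itself in $\f L^2$, as $\n\f\varphi$ is only in $\f L^2$. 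Together with $\f g\in L^2(0,T;(\V)^*)$ this gives $\t\vv\in L^2(0,T;(\V)^*)$.

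For $\t\dd$, equation~\eqref{eq:dir} yields $\t\dd=-(\vv\cdot\n)\dd+\skv\dd-\lambda\syv\dd-\gamma\tilde{\f q}$ a.e.\ in $(0,T)$. Since $\dd\in L^2(0,T;\f W^{2,3})\hookrightarrow L^2(0,T;\f W^{1,6})$, I would bound $\|\skv\dd\|_{\f L^3}+\|\syv\dd\|_{\f L^3}\le c\|\n\vv\|_{\f L^6}\|\dd\|_{\f L^6}$, a product of an $L^2(0,T)$- and an $L^\infty(0,T)$-function, hence in $L^2(0,T)$; furthermore $\tilde{\f q}\in L^2(0,T;\f L^3)\subset L^1(0,T;\f L^3)$ by the previous step; and finally $\|(\vv\cdot\n)\dd\|_{\f L^3}\le\|\n\dd\|_{\f L^6}\|\vv\|_{\f L^6}$, which is only a product of \emph{two} $L^2(0,T)$-functions and thus merely in $L^1(0,T)$ --- precisely the term responsible for the exponent $1$ in time. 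Altogether $\t\dd\in L^1(0,T;\f L^3)$, and since $\f L^3\hookrightarrow\f L^{3/2}$ this is consistent with the a priori regularity $\dd\in W^{1,2}(0,T;\f L^{3/2})$ from~\eqref{weakreg}.

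The main obstacle is not a single hard estimate but the bookkeeping: each nonlinearity closes in the asserted space only because of~\eqref{strsol} --- the $\f W^{1,6}$-bound on $\vv$ for the convective terms, $\dd\in L^2(\f W^{2,3})$ and $\dd\in L^\infty(\f L^{12})$ for placing $\tilde{\f q}$ in $L^2(\f L^3)$, and $\dd\cdot\syv\dd\in L^2(\f L^6)$ for the first summand of $\f T^L$. Verifying $\f T^L\in L^2(\Le)$ summand by summand is the most computational, though entirely elementary, part. In addition one has to observe that the right-hand side of~\eqref{eq:velo}, a priori only a functional on $\f W^{1,6}_{0,\sigma}$, extends continuously to $\V$ by exactly these bounds, so that the identification $\t\vv\in L^2(0,T;(\V)^*)$ (and analogously $\t\dd\in L^1(0,T;\f L^3)$, using $\f L^3\hookrightarrow\f L^{3/2}$) is legitimate.
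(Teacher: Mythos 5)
Your proposal is correct and follows essentially the same route as the paper: solve~\eqref{eq:velo} and~\eqref{eq:dir} for the time derivatives and estimate each term by H\"older's inequality and Sobolev embeddings using the strong-solution regularity~\eqref{strsol}, with the Leslie stress treated summand by summand so that it pairs against $\nabla\f\varphi\in\Le$ and $\f g\in L^2(0,T;(\V)^*)$ closing the velocity estimate. The only cosmetic deviation is that you place the cubic part of $\tilde{\f q}$ in $L^2(0,T;\f L^3)$ directly via $\dd\in L^\infty(0,T;\f L^{12})$, whereas the paper interpolates with the Gagliardo--Nirenberg inequality (estimate~\eqref{qab}); both are valid.
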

\begin{proof}

First, we estimate the time derivative of $\vv$.
Let $\f \varphi\in L^2(0,T;\V)$ be a test function in~\eqref{eq:velo}.
We estimate the terms individually. Because of the continuous embedding of $\V$ into $\f L^6$, we obtain for the convection term
\begin{align*}
\inttes{| ( ( \vv(t) \cdot \nabla ) \vv(t), \f \varphi(t) )|} &
\leq \| \vv \|_{L^\infty(\f L^2) } \| \nabla \vv \|_{L^2  ( \f L^3)}  \| \f \varphi \| _{L^2(\V)}\, .\\
\intertext{Similarly, the Ericksen stress can be estimated as}
\inttes{|( \nabla \dd(t) ^T \tilde{\f q}(t) , \f \varphi(t) )|}& \leq \| \nabla \dd \|_{L^\infty(\f L^2)} \| \tilde{\f q}\|_{L^2 ( \f L^3 )}  \| \f \varphi\|_{L^2(\V)}\,. \\
\intertext{For the right-hand side, we have that}
\inttes{|\langle \f g(t) , \f \varphi (t)\rangle |}&\leq \| \f g\|_{L^2 ( \Vd)}\| \f \varphi\|_{L^2(\Hb)}\, .
\end{align*}
With the definition of the Leslie stress tensor (see~\eqref{leslie}) , we get
\begin{align*}
\inttes{|( \f T^L(t); \nabla \f \varphi(t))  | }& \leq \Big( \mu_1 \| \dd \cdot \syv\dd  \|_{L^2(\f L^6)} \|\dd\|_{L^\infty(\f L^6)} ^2 + \mu_4 \| \syv \|_{L^2 (\f L^2)}\\ &\quad  +
( |\gamma(\mu_2+\mu_3) {}|+1 )\| \dd \|_{L^\infty ( \f L^6 )} \|  \tilde{\f q}\|_{L^2 (\f L^3)}   \\ & \quad + ((\mu_5+\mu_6) -\lambda(\mu_2+\mu_3))  \| \syv \|_{L^2( \f L^6)}\| \dd \|_{L^\infty ( \f L^6 )}^2 \Big ) \| \f \varphi \|_{L^2(\V) }\\
& \leq c\left ( \left (\| \dd\cdot \syv \dd  \|_{L^2(\f L^{6})} + \| \syv \|_{L^2( \f L^6)}+ \|  \tilde{\f q}\|_{L^\infty (\f L^3)}\right )\left (\| \dd \|_{L^\infty ( \f L^6 )}^2+1\right )\right )\| \f \varphi \|_{L^2(\V) }
\, .
\end{align*}
Due to the regularity assumptions on the strong solution (see Definition~\ref{def:strong}), the variational derivative of the free energy can be estimated in terms of the~$L^2(0,T;\f L^3)$-norm by standard embeddings and the Gagliardo--Nirenberg inequality~\cite[Section~21.19]{zeidler2A},
\begin{align}
\| \tilde{\f q} \|_{L^2(\f L^3)} \leq | \f \Lambda| \| \dd \|_{L^2(\f W^{2,3})} + \frac{1}{\varepsilon} \left ( \| \dd \|_{L^6(\f L^9)}^3 + \| \dd \|_{L^2(\f L^3)} \right ) \leq c \left (\| \dd \|_{L^2(\f W^{2,3})} +  \| \dd \|_{L^2(\f W^{2,3})}^{1/3} \| \dd \|_{L^\infty(\f L^6)}^{8/3}  +1\right ) \,.\label{qab}
\end{align}
Note that $\varepsilon$ is a constant parameter.
Altogether, we see that  $\t \vv \in L^2 (0,T;\Vd) $ and
\begin{align*}
\| \t \vv \|_{L^2(\Vd)}\leq {}& \| \vv \|_{L^\infty(\f L^2) } \| \nabla \vv \|_{L^2  ( \f L^3)} + \| \nabla \dd \|_{L^\infty(\f L^2)} \| \tilde{\f q}\|_{L^2 ( \f L^3 )}+\| \f g\|_{L^2 ( \Vd)}
\\& +c\left ( \left (\| \dd\cdot \syv \dd  \|_{L^2(\f L^{6})} + \| \syv \|_{L^2( \f L^6)}+ \|  \tilde{\f q}\|_{L^\infty (\f L^3)}\right )\left (\| \dd \|_{L^\infty ( \f L^6 )}^2+1\right )\right )
\end{align*}
with $\tilde{\f q}$ estimated in \eqref{qab}.

Recalling equation~\eqref{eq:dir} and estimate~\eqref{qab}, standard embeddings show that $\t \dd\in L^1(0,T;\f L^3 )$ with
\begin{align*}
\|\t \dd \|_{L^1(\f L^3)} &\leq \|\vv\|_{L^2(\f L^\infty)} \|\nabla \dd \|_{L^2(\f L^3)} + \|  \skv  \|_{L^2 (\f L^3)} \|\dd \|_{L^2( \f L^\infty)}  + | \lambda|\|\syv \|_{L^2 (\f L^3)}  \|\dd \|_{L^2( \f L^\infty)} +  \| \tilde{\f q}\|_{L^ 1 ( \f L^3)}\\
&\leq  c \left (\| \vv \|_{L^2(\f W^{1,6})} \|\dd\|_{L^2(\f W^{2,3})} + \| \tilde{\f q}\|_{L^ 2 ( \f L^3)}\right )\, .
\end{align*}
\end{proof}
In the course of the proof of Theorem~\ref{thm}, we shall employ the following three integration-by-parts formulae.
\section{Integration-by-parts formulae}
\begin{lem}\label{lem:intpart}
For functions $( \f v , \f d ) $ and $(\vv,\dd)$ fulfilling~\eqref{weakreg} and~\eqref{regtilde}, respectively, the integration-by-parts formulae
\begin{align}
\begin{split}
(\f v (t ) , \vv (t)) - ( \f v (s) , \vv (s)) &= \int_s^t \left (\langle \f v ( \tau ) , \t \vv ( \tau ) \rangle + \langle \t \f v ( \tau ) , \vv ( \tau )\rangle  \right )\text{\emph{d}} \tau\, ,\\
( \nabla \f d(t) ;\, \f \Lambda : \nabla \dd (t) ) - ( \nabla \f d (s);\, \f \Lambda : \nabla \dd (s)) & = -\int_s^t \left (( \t \f d(\tau) , \Delta_{\f \Lambda}\dd(\tau) ) + ( \Delta_{\f \Lambda} \f d(\tau), \t \dd(\tau) )\right ) \text{\emph{d}} \tau \, ,\\
( | \f d(t)|^2  , | \dd (t)|^2 ) - ( | \f d(s)|^2  , | \dd (s)|^2 )  &=2 \int_s^t \left (( \t \f d( \tau) \cdot \f d ( \tau), | \dd( \tau)|^2 ) + ( | \f d ( \tau)|^2  , \t \dd( \tau) \cdot \dd ( \tau))\right ) \text{\emph{d}} \tau \,
\end{split}\label{intpart}
\end{align}
hold true for every $s,t\in [0,T]$.
\end{lem}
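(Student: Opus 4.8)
The plan is to prove each of the three identities in~\eqref{intpart} by the same strategy: reduce to the classical integration-by-parts formula for pairings of an $L^p(0,T;V)$-function whose weak time derivative lies in $L^{p'}(0,T;V^*)$, after checking that the two factors and their time derivatives lie in the correct dual spaces. Throughout I would use the regularity~\eqref{weakreg} for $(\f v,\f d)$ and~\eqref{regtilde} together with~\eqref{strsol} for $(\vv,\dd)$, and the fact (stated in~\cite[Section~II.2]{diestel} or~\cite[Section~1.5]{roubicek}) that if $u\in L^p(0,T;V)$ with $\t u\in L^{p'}(0,T;V^*)$ then $u\in \C([0,T];H)$ for the pivot space $H$ and $\frac{\de}{\de t}\langle u,w\rangle = \langle\t u,w\rangle+\langle u,\t w\rangle$ in the scalar distributional sense whenever $w$ has the dual regularity; integrating from $s$ to $t$ then yields the claimed formula. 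Since all three right-hand sides are finite (see below), each left-hand side is absolutely continuous in $t$, so the identities hold for \emph{every} $s,t\in[0,T]$, not just almost every.

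For the first identity, the natural pivot is $\Ha$. I would pair $\f v\in L^2(0,T;\V)$, $\t\f v\in L^2(0,T;(\f W^{1,6}_{0,\sigma})^*)$ with $\vv\in L^2(0,T;\V)$, $\t\vv\in L^2(0,T;(\V)^*)$. The pairing $\langle\f v,\t\vv\rangle$ makes sense since $\f v\in L^2(0,T;\V)$ and $\t\vv\in L^2(0,T;(\V)^*)$; the pairing $\langle\t\f v,\vv\rangle$ makes sense since $\t\f v\in L^2(0,T;(\f W^{1,6}_{0,\sigma})^*)$ and $\vv\in L^2(0,T;\f W^{1,6}_{0,\sigma})$ by~\eqref{strsol} (here I use $\f W^{1,6}\hookrightarrow\He$ on the bounded domain so the restriction of the $\V$-pairing is consistent). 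A density argument — approximating $\f v$ and $\vv$ by smooth-in-time functions and passing to the limit using these bounds — gives the formula; alternatively cite the standard product rule directly.

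For the second identity I rewrite the left-hand side using the equivalence of $\|\n\cdot\|_{\f\Lambda}$ with the $\He$-seminorm and the symmetry $\f\Lambda_{ijkl}=\f\Lambda_{klij}$, so that $(\n\f d;\f\Lambda:\n\dd)$ is a symmetric bilinear form of $\f d,\dd$ in $\Hb$; after an integration by parts in space (legitimate since $\dd\in L^2(0,T;\f W^{2,3})\subset L^2(0,T;\Hc)$ and $\f d\in L^2(0,T;\Hc)$, with matching boundary data $\f d_1$ so the boundary terms cancel) it equals $-(\f d;\Lap\dd)$, symmetrically $-(\Lap\f d;\dd)$. Then I apply the product rule with pivot $\Le$: pair $\f d\in L^2(0,T;\Hc)\hookrightarrow L^2(0,T;\f L^3)$, $\t\f d\in L^2(0,T;\f L^{3/2})$ against $\Lap\dd=\di\f\Lambda:\n\dd\in L^2(0,T;\f L^3)$ (from $\dd\in L^2(0,T;\f W^{2,3})$) and symmetrically $\Lap\f d\in L^2(0,T;\Le)$ against $\t\dd\in L^1(0,T;\f L^3)$ and $\dd\in L^\infty(0,T;\f L^{12})\cap L^2(0,T;\f W^{2,3})$; Hölder in time closes the integrability. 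For the third identity the pivot is again $\Le$ but now with the nonlinear ``multiplier'' $|\dd|^2$: I treat $w:=|\f d|^2$, $\tilde w:=|\dd|^2$ and compute $\frac{\de}{\de t}(w,\tilde w)$, noting $\t w=2\f d\cdot\t\f d$ and $\t\tilde w=2\dd\cdot\t\dd$, and that $|\f d|^2\in L^\infty(0,T;\f L^3)$ (from $\f d\in L^\infty(0,T;\He)\hookrightarrow L^\infty(0,T;\f L^6)$), $|\dd|^2\in L^\infty(0,T;\f L^6)$ (from $\dd\in L^\infty(0,T;\f L^{12})$), while $\f d\cdot\t\f d\in L^2(0,T;\f L^1)+\dots$ — more carefully, $\t\f d\cdot\f d\in L^2(0,T;L^{?})$ using $\t\f d\in L^2(\f L^{3/2})$ and $\f d\in L^\infty(\f L^6)$ gives $L^2(0,T;L^{6/5})$, paired against $|\dd|^2\in L^\infty(\f L^6)$; and $|\f d|^2\in L^\infty(\f L^3)$ paired against $\t\dd\cdot\dd\in L^1(0,T;L^{12/5})$ from $\t\dd\in L^1(\f L^3)$, $\dd\in L^\infty(\f L^{12})$. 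These exponents all close, so the product rule applies to the scalar function $t\mapsto(|\f d(t)|^2,|\dd(t)|^2)$.

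The main obstacle is the third formula: unlike the first two, it does not directly fit the abstract ``$u\in L^p(V)$, $\t u\in L^{p'}(V^*)$'' template, because the relevant objects are the nonlinear quantities $|\f d|^2$ and $|\dd|^2$ rather than $\f d,\dd$ themselves, and one must verify that $|\f d|^2\in W^{1,1}(0,T;X)$ for a suitable $X$ with $\t(|\f d|^2)=2\f d\cdot\t\f d$ — i.e. that the chain rule for the square holds at the level of Bochner-Sobolev functions. The cleanest route is to approximate $\f d,\dd$ by functions smooth in time with values in a good space (mollification in $t$), for which the chain rule and product rule are elementary, and then pass to the limit using precisely the Hölder pairings listed above together with the continuity-in-time of $\f d$ (into $\f L^2$, hence after interpolation into weak-$\f L^6$) and of $\dd$; bookkeeping the exponents so every product lands in $L^1(0,T)$ is the only real work.
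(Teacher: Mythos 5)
Your overall strategy (approximate by functions smooth in time, then pass to the limit using H\"older bookkeeping of the crossed dualities) is the same as the paper's, and your treatment of the first and third formulae essentially reproduces its proof. However, there is a genuine gap in your argument for the second formula. You claim that a spatial integration by parts turns $(\n \f d;\,\f\Lambda:\n\dd)$ into $-(\f d,\Lap\dd)$ (and symmetrically $-(\Lap\f d,\dd)$) because "the boundary terms cancel." They do not: the Dirichlet datum $\f d_1$ is time-independent but in general nonzero, so the integration by parts leaves the boundary contribution $\int_{\partial\Omega}\f d_1\cdot\bigl((\f\Lambda:\n\dd(t))\,\f n\bigr)\,\de S$, which has nothing to cancel against; and since only the trace of $\dd$, not its conormal derivative, is prescribed, this term is not constant in time, so it does not even drop out of the difference between times $t$ and $s$. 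Estimating its variation would require a trace of $\n\t\dd$, which the available regularity $\t\dd\in L^1(0,T;\f L^3)$ does not provide. Moreover, even with homogeneous boundary data the product rule applied to $-(\f d,\Lap\dd)$ yields $-(\t\f d,\Lap\dd)-(\f d,\Lap\t\dd)$, not the asserted pairing $-(\Lap\f d,\t\dd)$; converting $(\f d,\Lap\t\dd)$ into $(\Lap\f d,\t\dd)$ is again a spatial integration by parts with boundary terms and with $\Lap\t\dd$, neither of which is controlled.

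The repair is to reverse the order of operations, which is exactly what the paper does: choose smooth approximations $\f d_n,\dd_n$ that retain the same time-independent boundary values, apply the temporal product rule first in the gradient form $(\n\t\f d_n;\,\f\Lambda:\n\dd_n)+(\n\f d_n;\,\f\Lambda:\n\t\dd_n)$, and only then integrate by parts in space; the boundary integrands then contain $\t\f d_n$ resp.\ $\t\dd_n$, which vanish on $\partial\Omega$ precisely because the boundary data are constant in time, and afterwards one passes to the limit with the H\"older pairings you list. A smaller point on the first formula: the "standard product rule" cannot be cited verbatim since the dualities are crossed ($\t\f v\in L^2(0,T;(\f W^{1,6}_{0,\sigma})^*)$ against $\vv\in L^2(0,T;\f W^{1,6}_{0,\sigma})$, versus $\t\vv\in L^2(0,T;\Vd)$ against $\f v\in L^2(0,T;\V)$), so the density argument is the actual proof; and to obtain the identity for \emph{every} $s,t\in[0,T]$ rather than almost every, one must identify the pointwise values, e.g.\ by showing that the approximate pairings $(\f v_n(\cdot),\vv_n(\cdot))$ converge uniformly on $[0,T]$ (the paper does this via a partition-of-unity estimate), rather than only invoking absolute continuity of the right-hand side.
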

\begin{proof}
We choose two approximate sequences $ \f v _n \in \C^1 ([0,T]; \V) $ and $ \vv_n \in \C^1 ([0,T]; \f W^{1,6}_{0,\sigma} )$ such that
\begin{align}
\begin{split}
\f v_n & \ra \f v \in 
L^2 (0,T; \V ) \cap W^{1,2} ( 0,T; ( \f W^{1,6}_{0,\sigma} )^*) \\
\vv _n & \ra \vv \in 
 L^{2} ( 0,T; \f W^{1,6}_{0,\sigma} ) \cap W^{1,2} ( 0,T; \Vd) \, ,
 \end{split}\label{appr}
\end{align}
which is possible in view of density.
For the approximate sequences, the integration-by-parts formula
\begin{align}
(\f v_n (t ) , \vv_n (t)) - ( \f v_n (s) , \vv_n (s)) &= \int_s^t \left (( \f v_n ( \tau ) , \t \vv_n ( \tau ) ) + ( \t \f v_n ( \tau ) , \vv_n ( \tau ))\right ) \de \tau\, \label{intpartn}
\end{align}
obviously holds true for all $s,t\in[0,T]$.
In the following, we derive estimates for the terms on the left-hand side of~\eqref{intpartn}.
Let us define a partition of the unity via a function $\phi\in \C^1([0,T])$ with $$
  | \phi(t)|\leq 1 \quad \text{for all }t \in [0,T]\, , \quad  \phi(0)=0 \text{ and } \phi (T)= 1\,.$$
Let
$ \f u \in \C^1 ([0,T]; \V )$ and $\tilde{\f u}\in \C^1([0,T];\f W^{1,6}_{0,\sigma} ) $.
We have that
$$ \f u ( t) = \phi(t) \f u(t) + ( 1- \phi (t)) \f u(t)\,,\quad \tilde{\f u} ( t) = \phi(t) \tilde{\f u}(t) + ( 1- \phi (t)) \tilde{\f u}(t).$$
We abbreviate $\bar{\phi}(t):=  1-\phi(t) $ for all $t\in[0,T]$ such that $ \bar{\phi }(T)= 0$.  It can easily be seen that for $t\in[0,T]$
\begin{align*}
\phi(t) ( \f u(t) , \tilde{\f u}(t)) & = \phi(0) ( \f u(0) , \tilde{\f u}(0) )  + \int_0^t \phi'(\tau) ( \f u (\tau ) , \tilde{\f u}(\tau))\de \tau  + \int_0^t\phi(\tau )( ( \t \f u( \tau) , \tilde{\f u}(\tau)) + ( \f u(\tau), \t\tilde{\f u}(\tau) ) ) \de \tau \\
\bar{\phi }(t) ( \f u(t) , \tilde{\f u}(t)) &=  \bar{\phi }(T) ( \f u(T) , \tilde{\f u}(T) )  + \int_t^T \phi'(\tau) ( \f u (\tau ) , \tilde{\f u}(\tau)) \de \tau - \int_t^T \bar{\phi }(\tau )( ( \t \f u( \tau) , \tilde{\f u}(\tau)) + ( \f u(\tau), \t \tilde{\f u}(\tau) ) ) \de \tau \, .\\
\end{align*}
Summing up  the two previous equations, we  find that for all $t \in [0,T]$
\begin{align}
\begin{split}
( \f u(t) , \tilde{\f u}( t) ) &= \int_0^T \phi'(\tau ) ( \f u (\tau) , \tilde{\f u}(\tau)) \de \tau  +\int_0^T  \phi(\tau) ( ( \t \f u( \tau) , \tilde{\f u}(\tau)) + ( \f u(\tau),\t \tilde{\f u}(\tau) ) ) \de \tau \\&\quad - \int_t^T( ( \t \f u( \tau) , \tilde{\f u}(\tau)) + ( \f u(\tau), \t\tilde{\f u}(\tau) ) ) \de \tau \\
& \leq \max_{t\in[0,T]}| \phi'(t)| \| {\f u}\|_{L^2(\Le)} \| \tilde{\f u}\|_{L^2 ( \Le)}  + 2 \| \t \f u\|_{ L^{2}((\f W^{1,6}_{0,\sigma})^*)} \| \tilde{\f u}\|_{L^{2}(\f W^{1,6}_{0,\sigma} )}   + 2\| \f u\|_{L^2(\V)} \| \t \tilde{\f u}\|_{L^2 ( \Vd)}\, .
\end{split}\label{esti}
\end{align}
The above estimate is now applied to the left-hand side of~\eqref{intpartn}.
Since $\{\f v_n \}$ and $\{\vv_n\}$ are Cauchy sequences in the spaces indicated in~\eqref{appr}, we see with
$$ ( \f v_n(t), \vv_n(t)) - ( \f v_m(t), \vv_m(t)) =  ( \f v_n(t)-\f v_m(t),\vv_n(t)) + ( \f v _m(t) , \vv_n(t)-\vv_m(t))\quad \text{for } t\in [0,T]\quad \text{and }m,n\in\N $$
and the  estimate~\eqref{esti} that $\{( \f v_n, \vv_n)\}$ is a Cauchy sequence in $\C([0,T])$. The continuous functions are complete and the limit is unique such that $\{ ( \f v_n ,\vv_n)\} $ converges in $\C([0,T])$ to $( \f v , \vv)$.
For the approximation of the terms on the right-hand side of the identity~\eqref{intpartn}, we see that the difference of the approximation and the limit can be estimated by
\begin{align*}
&\left | \int_s^t \langle \f v ( \tau) , \t \vv ( \tau) \rangle - ( \f v_n ( \tau ) , \t  \vv_n ( \tau)) \de \tau \right | \leq \| \f v \|_{ L^2 ( \V)} \| \t \vv - \t \vv _n \|_{L^2( \Vd)}  + \| \f v - \f v _n \| _{ L^2 ( \Vd )}  \| \t \vv _n \| _ {L^2 ( \Vd)} \, ,\\
&\left | \int_s^t \langle \t  \f v ( \tau ) , \vv ( \tau) \rangle- ( \t \f v_n (\tau ) , \vv_n ( \tau)) \de \tau\right |  \leq \| \t \f v - \t \f v_n \| _{ L^{2}( ( \f W^{1,6}_{0,\sigma} )^*) } \| \vv \|_{L^{2}(\f  W^{1,6}_{0,\sigma})}   + \| \t \f v_n \| _{L^{2}(\f W^{1,6}_{0,\sigma} )^*) } \| \vv- \vv _n \|_{L^{2}(\f  W^{1,6}_{0,\sigma})} \, .
\end{align*}
The right-hand sides of the  above estimates converge to zero for $n\ra \infty$ since $\{\f v_n\}$ and $\{ \vv_n\}$ converge to $\f v $ and $\vv$ in the sense of~\eqref{appr}.
This proves that the integration-by-parts formula~\eqref{intpartn} holds for $\f v$ and $\vv$.

For proving the second and third formula in~\eqref{intpart}, the weak solution~$\f d $ and the strong solution $\dd$ are approximated by sequences of smooth functions $\f d_n $ and $\dd_n$ in $L^2(0,T;\Hc) \cap W^{1,2}(0,T;\f L^{3/2}) $ and $ L^2(0,T;\f W^{2,3})\cap W^{1,2}(0,T;\f L^2)$, respectively.
The approximate sequences can be chosen such that every element, i.e.,\,$\f d_n$ and $ \dd_n$,  of this sequences fulfills the same boundary conditions as $\f d$ and $\dd$,  respectively. Since these boundary values of $\f d_n$ and $\dd_n$ are constant in time, their time derivative vanishes on the boundary. Hence, there are no boundary terms in the integration-by-parts formula
\begin{align*}
\left ( \nabla \f d_n (t) ;\, \f \Lambda : \nabla \dd_n (t) \right ) - \left ( \nabla \f d_n (s) ;\, \f \Lambda : \nabla \dd_n (s) \right ) ={}& \int_s^t\left ( \left ( \nabla \t \f d_n ( \tau) ;\, \f \Lambda : \nabla \dd_n(\tau)\right ) + \left ( \nabla \f d_n(\tau) ;\, \f \Lambda : \nabla \t \dd_n(\tau)\right )\right ) \de \tau \\={}& -\int_s^t\left ( \left (  \t \f d_n ( \tau) , \Lap \dd_n(\tau)\right ) + \left ( \Lap \f d_n(\tau) , \t \dd_n(\tau)\right )\right ) \de \tau \,.
\end{align*}
Going to the limit in $n$ shows the second integration-by-parts formula in~\eqref{intpart}.
The third integration-by-parts formula in~\eqref{intpart} is proved by the same approximation.

\end{proof}
\section{Proof of the main result}
We define the relative energy for two solutions to system~\eqref{eq:strong}
\begin{align}
\E( \f v, \f d| \vv,\dd) : = \frac{1}{2}\| \f v- \vv\|_{\Le}^2 + \frac{1}{2}\| \nabla \f d- \nabla \dd \|_{ \f \Lambda }^2 + \frac{1}{4\varepsilon } \left \| ( |\f d|^2-1 )- ( | \dd|^2 -1)\right \|_{L^2}^2 \, ,\label{E}
\end{align}
and the relative dissipation by
\begin{align}
\begin{split}
\W( \f v, \f d| \vv,\dd) &:= \mu_1\left \| \f d\cdot \sy v \f d - \dd \cdot \syv \dd \right \| _ { L^2}^2 + \mu_4 \left \| \sy v - \syv \right \|_{\Le}^2 \\
  & \quad + ( \mu_5+\mu_6-\lambda(\mu_2+\mu_3))\left \|\sy{v}\f d- \syv \dd \right \|_{\Le}^2  + \gamma \left \|\f q-\tilde{\f q} \right \|_{\Le}^2
  \end{split}\label{W}
\end{align}
\begin{lem}
\label{lem2}
Let $(\f v, \f d)$ be a suitable weak solution (see Definition~\ref{def:suit}) to system~\eqref{eq:strong}  for given initial values $(\f v_0, \f d_0)$.
Let $( \vv, \dd)$ be a strong solution (see Definition~\ref{def:strong}) to system~\eqref{eq:strong} for given initial values $(\vv_0, \dd_0)$.
Then for almost all $t\in[0,T]$
\begin{align}
\E( \f v , \f d |\vv, \dd ) ( t) \leq \E(  \f v_0 , \f d_0| \vv_0, \dd_0 ) e^ {\int_0^t \mathcal{K}(\f v ,\f d |\vv,\dd)(s)\text{\emph{d}} s }\, ,\label{estima}
\end{align}
where $\mathcal{K}$
is given by
\begin{multline}
\mathcal{K}( \f v , \f d |\vv, \dd ) = \\ c \left (1+ \| \f d \|_{L^\infty(\f L^6)}^2 + \|\dd \|_{L^\infty(\f L^6)}^2\right ) \left (
\|\vv \|_{ \f W^{1,6}}^2+ \|\tilde{\f q}\|_{\f L^3}^2+\|\dd \cdot \syv \dd \|_{ \f L^6}^2+\| \t \dd \|_{\f L^3} + \| |\dd|^2-1\|_{L^6}^2+ \| \f v \|_{\f L^6}^2 + \| \nabla \dd\|_{\f L^2}^2  \right )
\,
\label{K}
\end{multline}
and $c$ is a possibly large constant.
\end{lem}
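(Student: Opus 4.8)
# Proof Plan for Lemma 2

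\textbf{Overall strategy.} The plan is to derive a Gronwall-type differential inequality for the relative energy $\E(\f v,\f d|\vv,\dd)$ and then conclude by the integral form of Gronwall's lemma. The relative energy is the sum of three pieces: the kinetic part $\frac12\|\f v-\vv\|_{\Le}^2$, the elastic Dirichlet part $\frac12\|\nabla\f d-\nabla\dd\|_{\f\Lambda}^2$, and the nonconvex double-well part $\frac{1}{4\varepsilon}\||\f d|^2-|\dd|^2\|_{L^2}^2$. The first step is to expand each of these three squares and write $\E(\f v,\f d|\vv,\dd)(t)$ as a combination of (i) the energy of $(\f v,\f d)$, which is controlled from above by the energy inequality~\eqref{energyin} since $(\f v,\f d)$ is a suitable weak solution; (ii) the energy of $(\vv,\dd)$, which equals its value via the energy equality~\eqref{energyeq} from Lemma~\ref{eneq}; and (iii) the mixed terms $-(\f v,\vv)$, $-(\nabla\f d;\f\Lambda:\nabla\dd)$, $-\frac{1}{2\varepsilon}(|\f d|^2-1,|\dd|^2-1)$ (up to constants), whose time evolution is handled by the three integration-by-parts formulae in Lemma~\ref{lem:intpart}, valid because $(\f v,\f d)$ has the regularity~\eqref{weakreg} and $(\vv,\dd)$ the regularity~\eqref{regtilde} from Lemma~\ref{reg}.

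\textbf{Key steps in order.} First I would write out the identity for $\E(t)$ in terms of the quantities above; here the crucial observation — and the ``main novelty'' advertised in the abstract — is that the relative energy is \emph{not} simply the difference of energies but carries extra cross terms coming from the nonconvexity, and these must be tracked carefully (e.g.\ $\frac{1}{4\varepsilon}\||\f d|^2-|\dd|^2\|^2 = \F_{\mathrm{pot}}(\f d)+\F_{\mathrm{pot}}(\dd)-\frac{1}{2\varepsilon}(|\f d|^2-1,|\dd|^2-1)-\frac{1}{\varepsilon}(|\f d|^2-1,1)-\dots$, after accounting for the shift by $1$). Second, I would use~\eqref{energyin}, \eqref{energyeq}, and the integration-by-parts formulae to obtain $\E(t)\le\E(0)+\int_0^t(\text{dissipation terms})+\int_0^t(\text{remainder})$. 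Third, into the remainder I would substitute the weak formulations~\eqref{eq:velo}–\eqref{eq:dir} for $(\f v,\f d)$ tested against functions built from $\vv,\dd,\tilde{\f q}$ (which is legitimate since, by Definition~\ref{def:strong} and Lemma~\ref{reg}, these have enough integrability to serve as test functions in the classes $L^2(\f W^{1,6}_{0,\sigma})$ and $L^2(\f L^3)$), and the strong-solution equations~\eqref{eq:strong} tested against $\f v-\vv$ and $\f q-\tilde{\f q}$. Fourth, the dissipative quadratic terms from~\eqref{energyin} and~\eqref{energyeq} combine with cross terms to reconstruct the \emph{relative dissipation} $\W(\f v,\f d|\vv,\dd)$ defined in~\eqref{W}; the coefficient condition~\eqref{con} guarantees this quadratic form is non-negative (positive definite in the relevant variables), so $\W\ge 0$ can be dropped from the right-hand side. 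Fifth, every leftover term must be estimated, using Hölder, the Gagliardo–Nirenberg / Sobolev embeddings (as in~\eqref{qab}), and Young's inequality, by $\tfrac12\W + \mathcal{K}(s)\,\E(s)$ with $\mathcal{K}$ exactly of the form~\eqref{K}; the $\delta$-parts are absorbed into $\W$ and the rest yields the multiplier. Finally, Gronwall's inequality applied to $\E(t)\le\E(0)+\int_0^t\mathcal{K}(s)\E(s)\,\de s$ gives~\eqref{estima}.

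\textbf{Main obstacle.} The hard part will be Step~5: controlling the ``difference'' of the two nonlinear systems — especially the convection terms $(\f v\cdot\n)\f v$ vs.\ $(\vv\cdot\n)\vv$, the Ericksen-stress terms $\n\f d^T\f q$ vs.\ $\n\dd^T\tilde{\f q}$, and the Leslie-stress terms (the $\mu_1$ quartic term $(\f d\cdot\sy v\f d)\f d\o\f d$ and the $(\f d\o\f q)_{\sym/\skw}$ terms) — by the relative energy $\E$ and dissipation $\W$, with constants that only involve $\vv,\dd,\tilde{\f q}$ in the norms appearing in~\eqref{K} and never put a derivative or a high power on the \emph{weak} solution. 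This requires the standard ``commutator'' rewriting, e.g.\ $(\f v\cdot\n)\f v-(\vv\cdot\n)\vv=((\f v-\vv)\cdot\n)(\f v-\vv)+((\f v-\vv)\cdot\n)\vv+(\vv\cdot\n)(\f v-\vv)$, then using $\di(\f v-\vv)=0$ to discard the last term against the test function $\f v-\vv$ and estimating the middle term via $\|\n\vv\|_{\f L^6}$; the director equation and the nonconvex potential terms need analogous but more delicate algebra, where the factors $\|\f d\|_{L^\infty(\f L^6)}^2+\|\dd\|_{L^\infty(\f L^6)}^2$ and $\||\dd|^2-1\|_{L^6}^2$ in~\eqref{K} arise. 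A secondary subtlety is that $(\f v,\f d)$ satisfies only the energy \emph{inequality}, so one must be careful that all uses of it go ``in the right direction,'' and that the mixed terms entering through integration by parts are written so that no term requiring an energy \emph{equality} for the weak solution is needed.
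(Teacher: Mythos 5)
Your proposal follows essentially the same route as the paper: expand the relative energy into the two energies plus mixed terms, insert the energy inequality for the weak and the energy equality for the strong solution, evolve the mixed terms with the integration-by-parts formulae of Lemma~\ref{lem:intpart} (justified by Lemma~\ref{reg}), substitute the two weak formulations with the cross test functions, reconstruct the relative dissipation $\W$, absorb the remaining $(\gamma(\mu_2+\mu_3)-\lambda)$ cross term via the strict inequality in~\eqref{con} together with the $\delta$-Young terms, and conclude by Gronwall. The only blemish is the illustrative expansion of the double-well part: since $(|\f d|^2-1)-(|\dd|^2-1)=|\f d|^2-|\dd|^2$, the shift by $1$ cancels and no extra linear terms such as $\tfrac{1}{\varepsilon}(|\f d|^2-1,1)$ appear, so that parenthetical is harmlessly wrong and does not affect the plan.
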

\begin{rem}\label{rem:regE}
The functional~\eqref{K} only depends on the two norms $\| \f v \|_{L^2(\f L^6)} $ and $\| \f d \|_{L^\infty ( \f L^6)}$ of the weak solution, which are known to be finite.
 Additionally, it depends on several norms of the strong solution $(\vv, \dd)$. Due to the regularity assumptions~\eqref{regtilde} and estimate~\eqref{qab}, the functional $\mathcal{K}$ is bounded in $L^1(0,T)$.
 For the relative energy and the relative dissipation, note that $\E(\f v ,\f d |\vv ,\dd)\in L^\infty(0,T)$ and $\W (\f v ,\f d |\vv ,\dd)\in L^1(0,T)$ due to~\eqref{weakreg} and~\eqref{energyin}, respectively.
 \end{rem}
\begin{proof}
Consider the relative energy
\begin{align*}
\E( \f v , \f d |\vv, \dd )   &= \frac{1}{2}\|\f v \|_{\Le}^2 + \frac{1}{2}\|\nabla \f d\|_{\f \Lambda}^2 + \frac{1}{4\varepsilon} \| |\f d|^2-1\|_{L^2}^2    + \frac{1}{2}\|\vv \|_{\Le}^2 + \frac{1}{2}\|\nabla \dd\|_{\f \Lambda}^2 + \frac{1}{4\varepsilon} \| |\dd|^2-1\|_{L^2}^2  \\
& \quad - ( \f v, \vv) - ( \nabla \f d;\,  \f \Lambda: \nabla \dd) - \frac{1}{2\varepsilon }( | \f d|^2-1, | \dd|^2 -1)\, .
\end{align*}
We insert the energy inequality~\eqref{energyin} for the weak solution $( \f v , \f d)$ and the energy equality~\eqref{energyeq} for the smooth solution $( \vv, \dd)$.
This leads to
\begin{align*}
\E( \f v , \f d |\vv, \dd )   &\leq  \frac{1}{2}\|\f v_0 \|_{\Le}^2+ \frac{1}{2}\|\vv_0 \|_{\Le}^2 + \F( \f d_0) + \F( \dd_0)\notag\\
 & \quad - \mu_1\intt{\|\f d\cdot \sy{v}\f d\|_{L^2}^2+\|\dd\cdot \syv\dd\|_{L^2}^2 } -
  \mu_4 \intt{\|\sy{v}\|_{\Le}^2+\|\syv\|_{\Le}^2 }\notag\\
  & \quad  -( \mu_5+\mu_6-\lambda(\mu_2+\mu_3)) \intt{\|\sy{v}\f d\|_{\Le}^2 +\|\syv\dd\|_{\Le}^2}-\gamma \intt{ \|\f q\|_{\Le}^2+ \|\tilde{\f q}\|_{\Le}^2 } \notag\\
& \quad + \intte{\langle \f g , \f v+\vv \rangle} +( \gamma ( \mu_2+ \mu_3) - \lambda ) \intt{\left ( \f q , \sy{v} \f d \right ) + \left (\tilde{\f q} , \syv \dd \right )}\notag\\
& \quad - ( \f v, \vv) - ( \nabla \f d; \,\f \Lambda : \nabla \dd) - \frac{1}{2\varepsilon }( | \f d|^2-1, | \dd|^2 -1)\, .
\end{align*}

Adding the integral over the relative dissipation gives
\begin{align}
\begin{split}
\E( \f v , \f d |\vv, \dd )   &+ \intte{\W( \f v , \f d |\vv, \dd )}\leq  \frac{1}{2}\|\f v_0 \|_{\Le}^2+ \frac{1}{2}\|\vv_0 \|_{\Le}^2 + \F( \f d_0) + \F( \dd_0)\\
 & \quad - 2\mu_1\intt{ \f d\cdot \sy{v}\f d,\dd\cdot \syv\dd }  -
  2\mu_4 \intt{\sy{v};\syv }\\
  & \quad  -2( \mu_5+\mu_6-\lambda(\mu_2+\mu_3)) \intt{\sy{v}\f d,\syv\dd}-2\gamma \intt{ \f q,\tilde{\f q} }\\
& \quad + \intte{\langle \f g , \f v+\vv \rangle}  +( \gamma ( \mu_2+ \mu_3) - \lambda ) \intt{\left ( \f q , \sy{v} \f d \right ) + \left (\tilde{\f q} , \syv \dd \right )}\\
& \quad - ( \f v, \vv) - ( \nabla \f d ;\,\f \Lambda: \nabla \dd) - \frac{1}{2\varepsilon }( | \f d|^2-1, | \dd|^2 -1)\, .
\end{split}\label{neg}
\end{align}
The last term can be written via Lemma~\ref{lem:intpart} as
\begin{align*}
-& \frac{1}{2\varepsilon}( | \f d|^2-1, | \dd|^2 -1)(t)+\frac{1}{2\varepsilon}( | \f d_0|^2-1, | \dd_0|^2 -1)  =    - \frac{1}{2\varepsilon }\intt{( |\f d|^2 -1 , \t | \dd|^2) + ( \t | \f d|^2 , |\dd|^2-1)}\\
&= - \frac{1}{2\varepsilon }\intt{( (|\f d|^2 -1)-(|\dd|^2-1) , \t | \dd|^2) + ( \t | \f d|^2+ \t |\dd|^2 , |\dd|^2-1)}\\
&=- \frac{1}{2\varepsilon }\intt{( (|\f d|^2 -1)-(|\dd|^2-1) , 2\t \dd \cdot ( \dd - \f d)) + ( \t | \f d-\dd|^2 , |\dd|^2-1)}\\
 & \quad - \frac{1}{2\varepsilon }\intt{( (|\f d|^2 -1)-(|\dd|^2-1) , 2\t \dd \cdot  \f d) + (2 \t (\f d \cdot \dd)  , |\dd|^2-1)}\\
 &=- \frac{1}{2\varepsilon }\intt{( (|\f d|^2 -1)-(|\dd|^2-1) , 2\t \dd \cdot ( \dd - \f d)) + ( \t | \f d-\dd|^2 , |\dd|^2-1)}\\
 & \quad - \frac{1}{\varepsilon }\intt{( (|\f d|^2 -1) \f d, \t \dd ) + ( \t \f d   , (|\dd|^2-1)\dd)}\\
\end{align*}
Recall the definition of the variational derivative of the free energy (see~\eqref{q}),
\begin{align*}
-& \frac{1}{\varepsilon }\intt{( (|\f d|^2 -1) \f d, \t \dd ) + ( \t \f d   , (|\dd|^2-1)\dd)} + \intt{(\t \dd , \Delta_{\f \Lambda} \f d )+ ( \t \f d , \Delta_{\f \Lambda} \dd)}
= - \intt{( \f q, \t \dd )+( \t \f d , \tilde{\f q})}\, .
\end{align*}
Due to the integration-by-parts formulae~\eqref{intpart} and the two previous equations, the last  line in~\eqref{neg} can be reformulated as
\begin{align}
\begin{split}
-& ( \f v, \vv) - ( \nabla \f d ;\,\f \Lambda:  \nabla \dd) - \frac{1}{2\varepsilon }( | \f d|^2-1, | \dd|^2 -1) = - ( \f v_0 , \vv_0) - ( \nabla \f d_0;\,\f \Lambda : \nabla \dd_0)- \frac{1}{2\varepsilon}( | \f d_0|^2-1, | \dd_0|^2 -1)  \\
&- \frac{1}{2\varepsilon }\intt{( (|\f d|^2 -1)-(|\dd|^2-1) , 2\t \dd \cdot ( \dd - \f d)) + ( \t | \f d-\dd|^2 , |\dd|^2-1)} \\
&
- \intt{\langle \f v , \t \vv \rangle+\langle\t \f v, \vv \rangle + ( \f q ,  \t \dd) + ( \t \f d , \tilde{\f q}) }\,.
\end{split}\label{abl}
\end{align}
Note that the sum of terms with the initial conditions $( \f v _0, \f d _0)$ and $( \vv_0,\dd_0)$ appearing in~\eqref{neg} and~\eqref{abl} is the relative energy of the initial values,
\begin{align*}
\frac{1}{2}\|\f v_0 \|_{\Le}^2+ \frac{1}{2}\|\vv_0 \|_{\Le}^2 + \F( \f d_0) + \F( \dd_0)  - ( \f v_0 , \vv_0) - ( \nabla \f d_0\,\f \Lambda  \nabla \dd_0) - \frac{1}{2\varepsilon}( | \f d_0|^2-1, | \dd_0|^2 -1)= \E( \f v_0 , \f d _0 | \vv_0,\dd_0)\,.
\end{align*}

We use the fact that $(\f v, \f d) $ and $( \vv, \dd)$ are solutions to calculate the last line in~\eqref{abl} explicitly.
In order to handle the last line in~\eqref{abl}, we use the fact that $( \f v ,\f d)$ and $( \vv, \dd)$ are solutions to~\eqref{eq:strong}. This shows with~\eqref{neg} that
\begin{align*}
\E&( \f v , \f d |\vv, \dd )   + \intte{\W( \f v , \f d |\vv, \dd )}\leq \E( \f v _0 , \f d_0| \vv_0,\dd_0)+\\ &
\intt{( (\vv\cdot \nabla ) \vv , \f v) +( ( \f v\cdot \nabla) \f v, \vv )}
+ \intt{ ( ( \vv \cdot \nabla )\dd , \f q) + ( ( \f v \cdot \nabla )\f d , \tilde{\f q}) - ( \nabla \dd^T\tilde{\f q }, \f v ) - (\nabla \f d^T \f q , \vv )}
\\&\quad
+ \mu_1 \intt{\left (\dd \cdot \syv \dd, \sy v : ( \dd \otimes \dd -\f d \otimes \f d)\right )+ \left (\f d \cdot \sy v \f d , \syv : ( \f d \o \f d - \dd \o \dd ) \right )  }
\\&\quad + ( \mu_5 + \mu_6 - \lambda ( \mu_2+ \mu_3) ) \intt{ \left (\syv \dd , \sy v (\dd- \f d ) \right )+\left (  \sy v \f d , \syv (\f d- \dd )  \right )}
\\&\quad
- \gamma(\mu_2+\mu_3) \intt{\left ( \dd \otimes \tilde{ \f q} ; \sy v- \syv\right )+\left ( \f d \otimes \f q ; \syv- \sy v \right )}\\&\quad
- \intt{\left  ( \dd \otimes \tilde{\f q }; \sk v \right  )+ \left ( \f d \otimes \f q ; \skv\right ) - (\skv \dd , \f q )- ( \sk v \f d , \tilde{\f q}) }\\&\quad
 + \lambda \intt{( \syv \dd , \f q- \tilde{\f q} )+ ( \sy v \f d , \tilde{\f q}- \f q)}
\\
&\quad - \frac{1}{2\varepsilon }\intt{( (|\f d|^2 -1)-(|\dd|^2-1) , 2\t \dd \cdot ( \dd - \f d)) + ( \t | \f d-\dd|^2 , |\dd|^2-1)}
\\&
= \E( \f v _0 , \f d_0| \vv_0,\dd_0) +I_1+ I_2+\mu_1 I_3+(\mu_5+\mu_6-\lambda( \mu_2+\mu_3)) I_4-\gamma (\mu_2+\mu_3) I_5  +I_6+\lambda I_7- \frac{1}{2\varepsilon }I_8\, .
\end{align*}
In the above equation we have employed the weak formulation for the solutions $( \f v , \f d )$ and $ ( \vv , \dd )$ tested with $(\vv, \tilde{\f q})$ and $(\f v , \f q)$, respectively.
Note that the choice of test functions is justified due to the additional regularity (see Lemma~\ref{reg}).
We calculate and estimate the terms in the above equation individually. In the following let $\delta>0$.  For the fist term $I_1$, we observe
that
 \begin{align*}
I_1 &= \intt{( ( \f v \cdot \nabla ( \f v - \vv ) , \vv - \f v ) + ( ( ( \f v - \vv )\cdot \nabla ) \vv , \vv- \f v ) }
 = \intte{\left ((\f v - \vv )\otimes ( \vv- \f v ) ; \syv\right )  }\\ &
  \leq C_\delta  \intte{ \|\syv\|_{\f L^{3}}^2 \|\f v - \vv \|^2_{\Le}} + \delta \intte{\| \f v - \vv\|_{\f L^6}^2}\, .
\end{align*}
We recall that $\f v $ and $\vv$ are solenoidal such that $( (\f v \cdot\nabla ) \f w , \f w) = ( ( \vv \cdot\nabla ) \f w ,\f w)=0$ for all $\f w \in \V$.

The  term $I_2$ can be estimated by
\begin{align*}
I_2 &= \intt{( ( \f v \cdot\nabla ) ( \f d - \dd), \tilde{\f q}) + ( ( \vv\cdot \nabla )( \dd - \f d ), \f q)} = \intt{( ( \f v - \vv ), ( \nabla \f d - \nabla \dd)^T \tilde{\f q}) + ( ( \vv\cdot \nabla )( \dd - \f d ) , \f q - \tilde{ \f q}) }\\
& \leq C_\delta    \intt{ \| \tilde{\f q}\|_{ \f L^{3}}^2\| \nabla \f d - \nabla \dd\|_{\f L^2}^2} + \delta \intte{\| \f v - \vv\|_{\f L^6}^2}  + C_\delta   \intte{\| \vv \|_{ \f L^{\infty}}^2\|\nabla \dd - \nabla \f d \|_{\f L^2}^2} + \delta \intte{\|\f q - \tilde{\f q }\|_{\Le}^2}\, .
\end{align*}
With the standard embedding $\V  \hookrightarrow \f L^6$  and Korn's inequality~\cite[Theorem~10.1]{mclean},  there is a constant $c$ such that $ \| \f v - \vv\|_{\f L^6}\leq c \| \sy v - \syv \|_{\Le}$.
We rearrange and estimate the term~$I_3$ by
\begin{align*}
I_3 & =
\intt{ \dd \cdot \syv \dd , \left (\sy v -\syv\right ): \left ( \dd \otimes \dd - \f d \otimes\f d\right )}
 \\& \quad  +\intt{\f d \cdot \sy v \f d - \dd \cdot \syv \dd , \syv: \left ( \f d \otimes \f d - \dd \otimes \dd \right ) }\\
 & = \intt{ \dd \cdot \syv \dd , \left (\sy v -\syv\right ): \left ( \dd \otimes (\dd - \f d) \right )}\\
&\quad + \intt{ \dd \cdot \syv \dd , \left (\sy v\f d -\syv\dd\right )\cdot  \left ( \dd - \f d \right )}\\
& \quad +  \intt{\dd \cdot \syv \dd, \syv: (\dd - \f d ) \otimes ( \dd - \f d )  }
 \\& \quad  +\intt{\f d \cdot \sy v \f d - \dd \cdot \syv \dd , \syv:  (\f d - \dd )\otimes \dd  }
 \\& \quad  +\intt{\f d \cdot \sy v \f d - \dd \cdot \syv \dd , \syv:  \f d \otimes (\f d - \dd)  }\\
 & \leq  C_\delta \| \dd \|_{L^\infty(\f L^6)}^2 \intte{ \|\dd \cdot \syv \dd\|_{ L^6}^2   \|\f d - \dd\|_{\f L^6}^2} + \delta\intte{\| \sy v- \syv\|_{\Le}^2}\\
 & \quad  +C_\delta  \intte{\|\dd \cdot \syv \dd\|_{ L^3}^2\|\f d - \dd\|_{\f L^6 }^2} +\delta\intte{\| \sy v\f d- \syv\dd\|_{\Le}^2}\\
 & \quad +    \intte{\left (\|\dd \cdot \syv \dd\|_{ L^3}^2+ \| \syv \|_{\f L^3}^2\right )\|\f d - \dd \|_{\f L^6}^2} \\
 & \quad  +C_\delta  \| \dd \|_{L^\infty(\f L^6)}^2 \intte{  \| \syv \|_{\f L^6}^2   \| \f d - \dd\|_{\f L^6}^2} + \delta\intte{\| \f d \cdot \sy v\f d- \dd \cdot \syv\dd\|_{L^2}^2}\\
& \quad + C_\delta  \| \f d \|^2_{L^\infty(\f L^6)} \intte{\| \syv \|^2_{\f L^6}\| \f d -\dd \|_{\f L^6}^2 }  + \delta\intte{\| \f d \cdot \sy v\f d- \dd \cdot \syv\dd\|_{L^2}^2}\,.
\end{align*}
The embedding $\Hb\hookrightarrow\f L^6$ together with Poincar\'{e}'s inequality (see~Morrey~\cite[Thm. 6.5.6.]{morrey}) assures that
$$ \| \f d -\dd \|_{\f L^6}  \leq c \| \nabla \f d - \nabla \dd\|_{\f \Lambda}\,.$$
 We continue with $I_4$,
 \begin{align*}
I_4&=  \intte{\left ( \syv \dd, \left (\sy v- \syv\right )( \dd -\f d)\right ) } + \intte{\left ( \sy v \f d- \syv \dd  , \syv ( \f d - \dd)\right ) }\\
&\leq C_\delta   \intte{\| \syv\dd  \|^2_{\f L^3} \|\f d - \dd \|_{\f L^6}^2} +\delta\intte{\| \sy v- \syv\|_{\Le}^2} \\
& \quad +C_\delta   \intte{\| \syv  \|^2_{\f L^3}\|\f d - \dd \|_{\f L^6}^2} +\delta\intte{\| \sy v\f d- \syv\dd \|_{\Le}^2}\,.
\end{align*}
 The term $I_5$ can be rearranged as
\begin{align*}
I_5 ={}& \intt{  \left ( \sy v - \syv , ( \dd - \f d ) \o \tilde{\f q} \right )  + \left ( \syv - \sy v , \f d \o ( \f q - \tilde{\f q} ) \right )  }\\
={}& \intt{\left ( \sy v -\syv , ( \dd - \f d ) \o \tilde{\f q } \right )+ \left ( \syv ( \f d - \dd ) , \f q -\tilde{\f q} \right )}\\
& + \intte{\left ( \syv \dd - \sy v \f d , \f q - \tilde{\f q}  \right )}\, ,
\end{align*}
and thus be estimated by
\begin{align*}
I_5 \leq{} & \delta\intte{\| \sy v- \syv\|_{\Le}^2}+\delta \intte{\|\f q - \tilde{\f q }\|_{\Le}^2}   +  C_\delta \intte{  \left ( \| \tilde{\f q }\|^2_{\f L^3}+\| \syv \|^2_{\f L^3} \right )\|\f d - \dd \|_{\f L^6}^2}
\\
& + \intte{\left ( \syv \dd - \sy v \f d , \f q - \tilde{\f q}  \right )}\, .
\end{align*}
The term $I_6$ is bounded by
\begin{align*}
I_6 &= \intt{\left ( \sk v ( \dd- \f d) , \tilde{ \f q} \right ) + \left ( \skv ( \f d - \dd) , \f q\right )} \\
&= \intt{\left (\left ( \sk v - \skv \right ) ( \dd- \f d) , \tilde{ \f q}\right ) + \left ( \skv ( \f d - \dd ) , \f q-\tilde{\f q}\right )}\\
& \leq   C_\delta  \intte{\| \tilde{\f q } \|^2_{\f L^3} \|\f d - \dd \|_{\f L^6}^2} +\delta\intte{\| \sk v- \skv\|_{\Le}^2} \\
& \quad + C_\delta   \intte{\| \skv \|^2_{\f L^3}\|\f d - \dd \|_{\f L^6}^2} + \delta \intte{\|\f q - \tilde{\f q }\|_{\Le}^2}\,.
\end{align*}
Note that due to Korn's~\cite[Theorem~10.1]{mclean} and Poincar\'{e}'s inequality, we find that
\begin{align*}
\| \sk v- \skv\|_{\Le} \leq  \| \f v- \vv\|_{\Hb}\leq c \|\sy v - \syv\|_{\Le} \, .
\end{align*}

The term $I_7$ is already in the  form desired.
Finally, we estimate $I_8$.  Starting  with the first term, we observe that
\begin{align*}
& \intt{\left ( (|\f d|^2 -1)-(|\dd|^2-1)\right ) ( \dd - \f d) , \t \dd  }
 \leq   \intte{\| \t \dd\|_{  \f L^3}\left ( \|(|\f d|^2 -1)-(|\dd|^2-1) \|_{L^2}^2 + \| \f d - \dd \|_{\f L^6}^2\right ) }\, .
\end{align*}
Since $\dd$ is a strong solution (see Definition~\ref{def:strong}),  $\t \dd $ is in $L^1(0,T;\f L^3)$ due to Lemma~\ref{reg}.

Now we reformulate the second term of $I_8$. Using that equation~\eqref{eq:dir} is fulfilled by $\f d$ and $\dd$, respectively, yields
\begin{align*}
\frac{1}{2}&\intte{\left (  | \dd|^2 -1, \t | \f d - \dd |^2  \right )} \notag \\
&= \intt{(|\dd|^2-1)(\f d -\dd), \t \f d - \t \dd}\notag \\
&= \intt{(|\dd|^2-1)(\dd -\f d), ( \f v \cdot \nabla ) \f d - ( \vv\cdot \nabla) \dd - \sk v \f d + \skv \dd }\\
& \quad + \intt{(|\dd|^2-1)(\dd -\f d),
  \lambda ( \sy v \f d - \syv \dd ) + \gamma ( \f q - \tilde{\f q }) }\\ & = J_{1}+J_{2}\,.
\end{align*}
The term $J_1$ can be rewritten as
\begin{align*}
J_{1}={}&\intt{(|\dd|^2-1)(\dd -\f d), \nabla \dd( \f v- \vv) } + \intte{(|\dd|^2-1, \f v \cdot (\nabla  \f d - \nabla \dd) ( \dd - \f d)) }\\ &   - \intt{(|\dd|^2-1)(\dd -\f d),   \sk v( \f d-\dd)} -\intt{(|\dd|^2-1)(\dd -\f d),
    \left ( \sk v -\skv\right ) \dd }\notag\,.
\end{align*}
 We  observe that the third term, i.e.,\,$ ((|\dd|^2-1)(\dd -\f d),   \sk v( \f d-\dd) )$ vanishes since $\sk v$ is skew-symmetric.  Hence, we can estimate $J_1$ by
\begin{align*}
J_1&\leq C_\delta   \intte{\left (\||\dd|^2-1\|_{L^6}^2+ \| \nabla \dd \|_{\f L^{2}}^2\right )\|\f d - \dd\|_{\f L^6}^2}  + \delta \intte{ \| \f v - \vv \|_{\f L^6}^2 }\notag\\
& \quad +  \intte{\left (\||\dd|^2-1\|_{L^6}^2+ \|\f v \|_{\f L^{6}}^2 \right )\left (\| \nabla \f d - \nabla \dd \|_{\f \Lambda}^2+ \| \f d - \dd \|_{\f L^6}^2\right )} \notag\\
& \quad + C_\delta \|\dd\|_{L^\infty(L^6)}^2  \intte{ \||\dd|^2-1\|_{L^6}^2  \| \f d - \dd \|_{\f L^6}^2} + \delta \intte{\| \sk v- \skv\|_{\Le}^2} \notag\, .
\end{align*}
It remains to estimate the term $J_2$:
\begin{align*}
J_{2}& \leq C_\delta   \|\dd\|_{L^\infty(L^6)}^2 \intte{\||\dd|^2-1\|^2_{L^6}\| \f d - \dd \|_{\f L^6}^2}  + \delta \intt{\lambda \| \sy v\f d - \syv\dd\|_{\Le}^2+\gamma \| \f q-\tilde{\f q}\|_{\Le}^2 }\, .
\end{align*}

Inserting everything back into~\eqref{neg} yields
\begin{align*}
\E( \f v , \f d |\vv, \dd )(t)& + \intte{\W( \f v , \f d |\vv, \dd )}  \leq  \E( \f v_0 , \f d _0 | \vv_0,\dd_0) \\&\quad +( \gamma ( \mu_2+ \mu_3) - \lambda ) \intte{\left ( \f q-\tilde{\f q} , \sy{v} \f d - \syv \dd \right )} \\
& \quad + \delta c \intte{\W( \f v , \f d |\vv, \dd )} +  \intte{\mathcal{K}( \f v , \f d |\vv, \dd ) \E( \f v , \f d |\vv, \dd )}\, .
\end{align*}
Since the constants are assumed to fulfill the dissipative relation~\eqref{con}
we can find a real number $\zeta \in (0,1)$ such that
\begin{align}
\left (\gamma(\mu_2+\mu_3)-\lambda\right )^2 \leq \zeta^2 4 \gamma ( \mu_5+\mu_6-\lambda(\mu_2+\mu_3)).
\end{align}
The relative energy can be estimated further on with Youngs and H\"{o}lder's inequality, such that
\begin{align*}
\E&( \f v , \f d |\vv, \dd )(t) + \intte{\W( \f v , \f d |\vv, \dd )}  \\ & \leq  \E( \f v_0 , \f d _0 | \vv_0,\dd_0) \\&\quad +  \zeta \intte{\left (\gamma \| \f q-\tilde{\f q}\|_{\Le}^2 + ( \mu_5+\mu_6-\lambda(\mu_2+\mu_3))\| \sy{v} \f d - \syv \dd \|_{\Le}^2\right )} \\
& \quad + \delta c \intte{\W( \f v , \f d |\vv, \dd )} +  \intte{\mathcal{K} ( \f v , \f d |\vv, \dd ) \E( \f v , \f d |\vv, \dd )}\\
& \leq \E( \f v_0 , \f d _0 | \vv_0,\dd_0) + (\zeta + \delta  c) \intte{\W( \f v , \f d |\vv, \dd )} +  \intte{\mathcal{K} ( \f v , \f d |\vv, \dd ) \E( \f v , \f d |\vv, \dd )}\,.
\end{align*}
We now choose $\delta $ sufficiently small such that $ \delta \leq (1- \zeta )/c$. Thus,  the relative dissipation $\W$ can be absorbed into the left-hand side.
Note that $c$ does not depend on $\delta$, but only on the constants arising from the embeddings and Korn's inequality as well as the constants of the system~(see~\eqref{con}).
The assertion~\eqref{estima} immediately follows from Gronwall's estimate (see Remark~\ref{rem:regE}).
\end{proof}
\begin{proof}[Proof of Theorem~\ref{thm}]
The main Theorem~\ref{thm} is a direct consequence of Lemma~\ref{lem2}.
\end{proof}
\bibliographystyle{abbrv}

\begin{thebibliography}{10}

\bibitem{beris}
A.~N. Beris and B.~J. Edwards.
\newblock {\em Thermodynamics of flowing systems with internal microstructure}.
\newblock Oxford University Press, New York, 1994.

\bibitem{breit}
D.~Breit, E.~Feireisl, and M.~Hofmanov{\'a}.
\newblock Incompressible limit for compressible fluids with stochastic forcing.
\newblock {\em Arch. Rational Mech. Anal.}, 222(2):895--926, 2016.

\bibitem{allgemein}
C.~Cavaterra, E.~Rocca, and H.~Wu.
\newblock Global weak solution and blow-up criterion of the general
  {E}ricksen--{L}eslie system for nematic liquid crystal flows.
\newblock {\em J. Differential Equations}, 255(1):24--57, 2013.

\bibitem{dai2}
M.~Dai.
\newblock Existence of regular solutions to an {E}ricksen--{L}eslie model of
  the liquid crystal system.
\newblock {\em Commun. Math. Sci.}, 13(7):1711--1740, 2015.

\bibitem{dai1}
M.~Dai, J.~Qing, and M.~Schonbek.
\newblock Regularity of solutions to the liquid crystals systems in {$\Bbb
  R^2$} and {$\Bbb R^3$}.
\newblock {\em Nonlinearity}, 25(2):513--532, 2012.

\bibitem{diestel}
J.~Diestel and J.~J. Uhl, Jr.
\newblock {\em Vector measures}.
\newblock American Mathematical Society, Providence, Rhode Island, 1977.

\bibitem{unsere}
E.~Emmrich and R.~{Lasarzik}.
\newblock {Existence of weak solutions to the {E}ricksen--{L}eslie model for a general class of free energies}.
\newblock {\em ArXiv e-prints, 1711.10277}, 2017.

\bibitem{Erick}
J.~L. Ericksen.
\newblock {Conservation laws for liquid crystals}.
\newblock {\em J. Rheol.}, 5:23--34, 1961.

\bibitem{feireislstab}
E.~Feireisl.
\newblock Relative entropies in thermodynamics of complete fluid systems.
\newblock {\em Discrete Contin. Dyn. Syst.}, 32(9):3059--3080, 2012.

\bibitem{feireislsingular}
E.~Feireisl.
\newblock Relative entropies, dissipative solutions, and singular limits of
  complete fluid systems.
\newblock In {\em Hyperbolic Problems: Theory, Numerics, Applications},
  volume~8 of {\em AIMS on Applied Mathematics}, pages 11--28. AIMS,
  Springfield, USA, 2014.

\bibitem{Feireislrelative}
E.~Feireisl, B.~J. Jin, and A.~Novotn{\'y}.
\newblock Relative entropies, suitable weak solutions, and weak-strong
  uniqueness for the compressible {N}avier--{S}tokes system.
\newblock {\em J. Math. Fluid Mech.}, 14(4):717--730, 2012.

\bibitem{suitable}
E.~Feireisl, A.~Novotn{\'y}, and Y.~Sun.
\newblock Suitable weak solutions to the {N}avier--{S}tokes equations of
  compressible viscous fluids.
\newblock {\em Indiana Univ. Math. J.}, 60(2):611--631, 2011.

\bibitem{fischer}
J.~Fischer.
\newblock A posteriori modeling error estimates for the assumption of perfect
  incompressibility in the {N}avier--{S}tokes equation.
\newblock {\em SIAM J. Numer. Anal.}, 53(5):2178--2205, 2015.

\bibitem{masse}
R.~{Lasarzik}.
\newblock {Measure-valued solutions to the Ericksen--Leslie model equipped with
  the Oseen--Frank energy}.
\newblock {\em ArXiv e-prints, 1711.04638}, Nov. 2017.

\bibitem{weakstrong}
R.~{Lasarzik}.
\newblock {Weak-strong uniqueness for measure-valued solutions to the
  Ericksen--Leslie model equipped with the Oseen--Frank free energy}.
\newblock {\em ArXiv e-prints, 1711.03371}, Nov. 2017.

\bibitem{leslie}
F.~M. Leslie.
\newblock Some constitutive equations for liquid crystals.
\newblock {\em Arch. Rational Mech. Anal.}, 28(4):265--283, 1968.

\bibitem{linliu1}
F.-H. Lin and C.~Liu.
\newblock Nonparabolic dissipative systems modeling the flow of liquid
  crystals.
\newblock {\em Comm. Pure Appl. Math.}, 48(5):501--537, 1995.

\bibitem{linliu3}
F.-H. Lin and C.~Liu.
\newblock Existence of solutions for the {E}ricksen--{L}eslie system.
\newblock {\em Arch. Rational Mech. Anal.}, 154(2):135--156, 2000.

\bibitem{mclean}
W.~McLean.
\newblock {\em Strongly elliptic systems and boundary integral equations}.
\newblock Cambridge University Press, Cambridge, 2000.

\bibitem{morrey}
C.~B. {Morrey~Jr.}
\newblock {\em Multiple integrals in the calculus of variations}.
\newblock Springer, Berlin, 1966.

\bibitem{prodi}
G.~Prodi.
\newblock Un teorema di unicit\`a per le equazioni di {N}avier--{S}tokes.
\newblock {\em Ann. Mat. Pura Appl. (4)}, 48:173--182, 1959.

\bibitem{roubicek}
T.~Roub{\'{\i}}{\v{c}}ek.
\newblock {\em Nonlinear partial differential equations with applications}.
\newblock Birkh{\"a}user, Basel, 2005.

\bibitem{serrin}
J.~Serrin.
\newblock On the interior regularity of weak solutions of the
  {N}avier--{S}tokes equations.
\newblock {\em Arch. Rational Mech. Anal.}, 9:187--195, 1962.

\bibitem{yang}
Y.-F. Yang, C.~Dou, and Q.~Ju.
\newblock Weak-strong uniqueness property for the compressible flow of liquid
  crystals.
\newblock {\em J. Differential Equations}, 255(6):1233--1253, 2013.

\bibitem{zeidler2A}
E.~Zeidler.
\newblock {\em Nonlinear functional analysis and its applications. {II}/{A}}.
\newblock Springer-Verlag, New York, 1990.

\bibitem{zhao}
J.-H. Zhao and Q.~Liu.
\newblock Weak-strong uniqueness of hydrodynamic flow of nematic liquid
  crystals.
\newblock {\em Electron. J. Differential Equations}, 2012(182):1--16, 2012.

\end{thebibliography}
\small

\end{document}